\documentclass[a4paper,11pt]{amsart}
\usepackage{hyperref,latexsym}
\usepackage{enumerate}
\usepackage{graphicx}

\usepackage{xcolor}

\theoremstyle{plain}
\newtheorem{theorem}{Theorem}[section]
\newtheorem{lemma}[theorem]{Lemma}
\newtheorem{corollary}[theorem]{Corollary}
\newtheorem{proposition}[theorem]{Proposition}
\theoremstyle{definition}
\newtheorem{definition}[theorem]{Definition}
\newtheorem{example}{Example}
\theoremstyle{remark}
\newtheorem{remark}{Remark}

\newcommand{\M}{\mathcal{M}}

\begin{document}

\title[ Magnetic Billiards]
      {Magnetic billiards: Non-integrability for strong magnetic field; Gutkin type examples}

\date{November 2019}
\author{Misha Bialy, Andrey E. Mironov, Lior Shalom}
\address{M. Bialy and L. Shalom, School of Mathematical Sciences, Tel Aviv
University, Israel} \email{bialy@tauex.tau.ac.il, shalom@mail.tau.ac.il}
\address{A.E. Mironov, Sobolev Institute of Mathematics, Novosibirsk, Russia } \email{mironov@math.nsc.ru}
\thanks{M.B. and L.S. were supported in part by the Israel Science Foundation grant}
\thanks{ A.M. was supported by Mathematical Center in Akademgorodok
	Novosibirsk State University, Pirogova street, 2, 630090, Novosibirsk, Russia
	and
	Sobolev Institute  of Mathematics,  4 Acad. Koptyug avenue, 630090, Novosibirsk, Russia.}

\subjclass[2010]{37J40,37J35} \keywords{{magnetic billiards,
polynomial integrals, Birkhoff conjecture, Wegner examples, Zindler curves}}

\begin{abstract}  We consider 
	magnetic billiards under a strong constant magnetic field. The purpose of this paper is two-folded. We examine the question of existence of polynomial integral of billiard magnetic flow. As in our previous paper \cite{BM0} we succeed to reduce this question to algebraic geometry test on existence of polynomial integral, which shows polynomial non-integrability for all but finitely many values of the magnitude.  
	In the second part of the paper we construct examples of magnetic billiards which have the so called $\delta$-Gutkin property, meaning that any Larmor circle entering the domain with angle $\delta$ exits the domain with the same angle $\delta$. For ordinary Birkhoff billiard in the plane such examples were introduced by E. Gutkin and are very explicit. Our construction of Gutkin magnetic billiards relies on beautiful examples by F.Wegner of the so called Zindler curves, which are related to the problem of floating bodies in equilibrium, which goes back to S.Ulam (Problem 19 in Scottish book \cite{scottish}). We prove that Gutkin magnetic billiard can be obtained as a parallel curve to a Wegner curve. Wegner curves can be written by elliptic functions in polar coordinates so the construction of magnetic Gutkin billiard is rather explicit but much more complicated. 
	
\end{abstract}

\maketitle



\section{\bf Introduction and main results}

In this paper we consider
a magnetic billiard inside a domain
$\Omega\subset\mathbb{R}^2$ bounded by a simple smooth
closed curve $\gamma$. The  magnetic field is assumed to be of
constant magnitude $\beta>0$,
so the
particle moves inside $\Omega$ with unit speed along an arc of a Larmor circle (always oriented counterclockwise)
of constant radius $$r=\frac{1}{\beta}.$$ Upon hitting the boundary $\gamma=\partial \Omega$, the particle is reflected
according to the law of geometric optics. This model is called
magnetic Birkhoff billiard. We denote by $g^t$ the  magnetic billiard flow, i.e. the flow of unit tangent vectors to billiard trajectories.

Magnetic
Birkhoff billiards were introduced by Robnik and Berry \cite{BR} and  studied in many papers
\cite{Berg}, \cite{B}, \cite{GB}, \cite{k}, 
\cite{T2}.

In this paper we shall assume that the magnitude of the magnetic field is relatively large with respect to the curvature $k$ of the boundary curve $\gamma$. More precisely, let $r_0$ be the maximal possible radius for a tubular neighborhood of $\gamma$  (which is diffeomorphic to the normal bundle of $\gamma$). In particular, one has
$$r_0\leq\frac{1}{\max |k|}.$$ Our assumption on the magnetic field is such that the radius of Larmor circles $r$ satisfy:
\begin{equation}\label{assumption}
r<\frac{r_0}{2}\leq \frac{1}{2 \max |k|}.
\end{equation}
In particular
$$
 {\max}|k|<\beta/2.
$$
Under this assumption every Larmor arc entering the domain transversally to the boundary cannot be tangent to $\gamma$ at the exit point, and thus gets reflected according to the billiard rule. Moreover this assumption assures that the parallel curves to 
$\gamma$ at the distance up to $2r$ are all smooth (see Section 2).
Notice, that the assumption does not require the domain to be convex.
We refer to \cite{BR} for the discussion of the dynamical behavior of three possible regimes-
weak, intermediate and strong magnetic field.
\begin{figure}[h]
	\centering
	\includegraphics[width=0.5\linewidth]{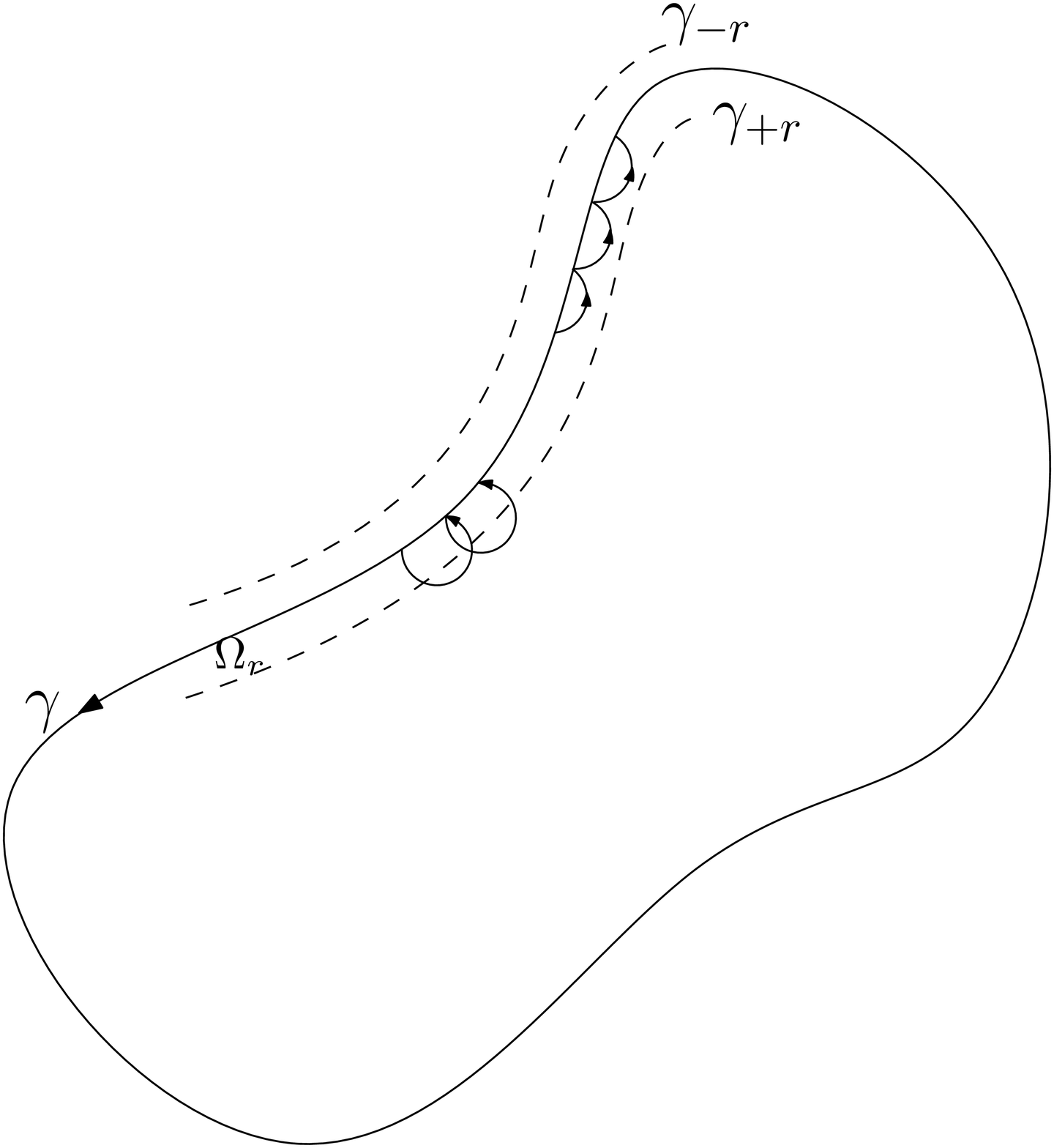}
	\caption{Magnetic billiard.}
	\label{1}
\end{figure}
Recently, in \cite{BM0,royal} we examined this problem in an
algebraic setting for small magnitudes of the magnetic field $\beta<\min k(x)$, using ideas from our recent papers on ordinary
Birkhoff billiards \cite{BM1}, \cite{BM2} (which in turn were influenced by previous
results of \cite{Bolotin} and \cite{Tab}).

 In \cite{BR}, the computer evidence
of chaotic regions of a magnetic billiard inside an ellipse is
demonstrated for all magnitudes $\beta$ of the magnetic field. In particular for $\beta >0$, unlike the case $\beta=0$, the pictures show that the magnetic billiard is not integrable. Recently even better pictures appeared
in \cite{albers}.

We present two new results. First result states the non-existence of polynomial integrals for magnetic billiard flow in a strong field. 
This is the extension of the results and techniques obtained in \cite{BM0} for the case of weak magnetic fields.

The second result proves the existence of Gutkin type magnetic billiards.  
The key ingredient of our approach to both results is to consider the "dual" object to magnetic billiard, namely the magnetic billiard map $\mathcal M$. Map $\mathcal M$ acts on the domain $\Omega_r$ which consists of all centers of Larmor circles intersecting the boundary curve $\gamma$. This domain is the annulus bounded by two smooth parallel curves $\gamma_{\pm r}$ and is a natural phase space of the magnetic billiard. Moreover, $\mathcal M$ is a symplectic diffeomorphism of $\Omega_r$, transforming Larmor center to the next one (see Section \ref{S2} below).

\begin{example}\label{example}Let $\gamma$ be a circle  centered at the
	origin. Then the function which measures the distance of the center
	of Larmor circle to the origin is invariant under reflections and
	hence is an integral $h$ of the billiard flow $g^t$. Specifically,
	$h$ has the form:
	$$h(x,v)=x_1^2+x_2^2+\frac{2}{\beta}(v_1x_2-v_2x_1).$$
	where $x\in\gamma$ and $v$ the unit inward vector at $x$.
\end{example}It is remarkable that there are no other examples known of
integrable magnetic billiards. Similarly to Birkhoff's conjecture for
ordinary billiards  (see \cite {royal} with references therein for recent progress in this conjecture), we suggest that the only integrable magnetic
billiard is the circular one. As usual, integrability can be
understood in various ways. In this paper we restrict to integrals which are
polynomial in the velocities of magnetic billiard flows in a strong magnetic field. Another
approach, that of the \emph{total} integrability, was
considered in \cite{B} for weak magnetic fields. It is an open question how to implement this approach for strong fields.

We turn now to the precise formulation of our results.
In the next theorem we assume that the  polynomial in momenta first integral is defined on the subset 
of the  phase space consisting of tangent vectors to all Larmor arcs lying inside $\Omega$ which intersect $\partial\Omega$ (see Definition 2.2 below). 
\begin{theorem}\label{main1}
	Let $\Omega$ be a  bounded domain with a smooth
	boundary $\gamma=\partial\Omega$ such that
	the curvature assumption (\ref{assumption}) is satisfied. Suppose that the
	magnetic billiard flow $g^t$ in $\Omega$ admits a non-constant polynomial in momenta integral $\Phi.$
	Then the curves $\gamma_{\pm r}$ are real ovals of affine algebraic curves which are non-singular $\mathbf{C}^2.$

\end{theorem}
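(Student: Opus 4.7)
\emph{Proof plan.} The strategy follows the algebraic scheme of \cite{BM0}, adapted to the strong-field regime: extract from $\Phi$ a non-trivial polynomial-in-velocities integral of the ordinary (non-magnetic) Birkhoff billiard in $\Omega$, then invoke the Bialy--Mironov algebraic-geometric criterion from \cite{BM1,BM2} to force algebraicity of $\gamma$ and, via the offset construction, of its parallel curves $\gamma_{\pm r}$.

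Write $\Phi = \Phi_0 + \cdots + \Phi_N$ in $v$-homogeneous components and substitute into the conservation law $v\cdot\nabla_x\Phi + \beta (Jv)\cdot\nabla_v\Phi = 0$, where $J$ denotes rotation by $\pi/2$. Matching $v$-degrees, the top equation is $v\cdot\nabla_x\Phi_N = 0$, so $\Phi_N$ is a polynomial-in-$v$ integral of the straight-line geodesic flow. The billiard reflection at $p\in\gamma$ yields $\Phi(p,v)=\Phi(p,v-2(v\cdot n)n)$ for every admissible incoming $v$ at $p$; the difference $\Phi_N(p,v)-\Phi_N(p,v-2(v\cdot n)n)$ is then a $v$-homogeneous polynomial of degree $N$ vanishing on an open arc of $\{|v|=1\}$, so by homogeneity and analyticity it vanishes for all $v\in\mathbb{R}^2$. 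Hence $\Phi_N$ is a polynomial-in-$v$ integral of the ordinary Birkhoff billiard in $\Omega$. After subtracting a suitable multiple of $(v_1^2+v_2^2)^{N/2}$ and, if necessary, descending to the next non-trivial homogeneous component, we may assume the effective leading part is not a constant multiple of a power of $|v|$.

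One then invokes the algebraic-geometric criterion of \cite{BM1,BM2}: the existence of such a non-trivial polynomial integral of an ordinary Birkhoff billiard forces $\gamma$ to be a real oval of a non-singular affine algebraic curve $C\subset\mathbf{C}^2$. The parallel curves $\gamma_{\pm r}$ at distance $r$ from $\gamma$ are cut out by polynomials obtained as resultants in the classical offset construction, so they are real ovals of affine algebraic curves $C_{\pm r}\subset\mathbf{C}^2$. The hypothesis \eqref{assumption}, $r<r_0/2$, ensures that $\gamma_{\pm r}$ are smooth real curves with no cusps or self-intersections; a direct verification on the offset resultant shows that no complex singularities arise on the real components, giving non-singularity in $\mathbf{C}^2$ as required.

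The main obstacle lies in the reduction to a non-trivial leading part: if successive subtractions of powers of $|v|^2$ reduced every homogeneous component to a constant multiple of $|v|^k$, the BM criterion would yield no geometric information on $\gamma$. Handling this requires careful bookkeeping of the triangular magnetic recursion $v\cdot\nabla_x\Phi_{k-1}+\beta(Jv)\cdot\nabla_v\Phi_k=0$ together with the reflection identity, verifying that at some stage the latter cannot be trivially satisfied -- the magnetic analogue of the reduction carried out in \cite{BM0}.
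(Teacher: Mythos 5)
There are two genuine gaps, and the second one is fatal to the whole strategy. First, the extraction of an ordinary billiard integral from $\Phi_N$ does not go through as stated: both the magnetic conservation law and the reflection identity hold only on the unit circle $\{|v|=1\}$, so the corresponding polynomial identities hold only modulo $v_1^2+v_2^2-1$, and the individual $v$-homogeneous components of $\Phi$ need not satisfy either identity (the top component of a polynomial divisible by $|v|^2-1$ is $|v|^2$ times something, not zero). You acknowledge in your last paragraph that the reduction to a non-trivial leading part is the ``main obstacle,'' but you do not resolve it; as written, the argument could terminate with every component a multiple of a power of $|v|^2$, yielding no information about $\gamma$.

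The second gap is more fundamental: even if you could show that $\gamma$ itself is a real oval of a non-singular algebraic curve, the conclusion of Theorem \ref{main1} concerns the offsets $\gamma_{\pm r}$, and offsets of non-singular algebraic curves are typically \emph{singular} in $\mathbf{C}^2$. The paper's own Corollary \ref{ellipse} exhibits this: the offset of an ellipse is an irreducible degree-$8$ curve with explicit complex singular points for every admissible $r$ (see also \cite{FN}). So your claim that ``a direct verification on the offset resultant shows that no complex singularities arise'' is false in general, and in fact the non-singularity of the offsets is precisely the strong constraint that the theorem extracts from integrability --- it is what makes Corollaries \ref{all1} and \ref{ellipse} deliver non-integrability. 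The paper obtains it by an entirely different mechanism: it transfers $\Phi$ to a genuine polynomial $F$ on the annulus of Larmor centers (Theorems \ref{smooth} and \ref{harmonics}), shows $\gamma_{\pm r}\subset\{F=0\}$ (Proposition \ref{prop}), and then expands the invariance relation (\ref{P}) to third order in the incidence angle to obtain $H(f_{\pm r})\pm\beta|\nabla f_{\pm r}|^3=\mathrm{const}\neq 0$ along $\gamma_{\pm r}$, the constant being non-zero by the curvature bound of Lemma \ref{bounds} (which is where assumption (\ref{assumption}) enters); a singular point of $\{f_{\pm r}=0\}$ in $\mathbf{C}^2$ would force both $H(f_{\pm r})$ and $\nabla f_{\pm r}$ to vanish there, contradicting the non-zero constant. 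Your proposal contains no substitute for this third-order identity, so it cannot reach the stated conclusion.
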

\begin{corollary}\label{all1}
	For any non-circular domain $\Omega$ in the plane, the magnetic
	billiard inside $\Omega$ has no non-constant polynomial in momenta integral for all but
	finitely many values of $\beta$.
\end{corollary}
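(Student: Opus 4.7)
The approach is to take the contrapositive of Theorem~\ref{main1}: if the magnetic billiard in $\Omega$ admits a polynomial integral for infinitely many values of $\beta$, then for infinitely many radii $r=1/\beta\in(0,r_0/2)$ the parallel curve $\gamma_r$ must be (an oval of) a non-singular affine algebraic curve in $\mathbb{C}^{2}$. The plan is to show that this is impossible once $\gamma$ is not a circle.

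First I would upgrade the algebraicity of a single $\gamma_r$ to the algebraicity of $\gamma$ itself. The curve $\gamma$ is a parallel curve of $\gamma_r$, and parallels of algebraic curves are algebraic: one eliminates the foot-of-normal coordinates $(x_0,y_0)$ from the defining equation $F(x_0,y_0)=0$ of $\gamma_r$ together with the offset relations, after squaring to clear the denominator $|\nabla F|$. So from now on I assume $\gamma$ is an affine algebraic curve.

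The second step is the geometric heart. Let $\tilde\gamma$ denote the smooth compact model of the projective closure of $\gamma$, and extend the signed curvature to a meromorphic function $k\colon\tilde\gamma\to\mathbb{P}^{1}$. Because $\gamma$ is non-circular, $k$ is non-constant on the real curve, hence on $\tilde\gamma$; a non-constant meromorphic function on a compact Riemann surface is surjective onto $\mathbb{P}^{1}$. In a local analytic parametrization the offset map $\phi_r(p)=p+r\,n(p)$ has derivative proportional to $1-r\,k(p)$, so $\phi_r$ fails to be an immersion at precisely the preimages of $1/r$ under $k$. A short local Taylor expansion shows that at such a point, generically (when $k'\neq 0$), the image $\gamma_r$ has the standard cusp singularity $y^{2}=x^{3}$, and in particular an honest singular point in $\mathbb{C}^{2}$. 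By surjectivity of $k$, the equation $k(p)=1/r$ has solutions in $\tilde\gamma$ for every $r\in\mathbb{C}^{*}$; since only finitely many points of $\tilde\gamma$ lie over infinity of $\gamma$, for all but finitely many $r$ at least one such solution sits in the affine part of $\gamma$, producing an affine singularity of $\gamma_r$.

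Combined with Theorem~\ref{main1}, this shows that for a non-circular $\Omega$ the polynomial integral can exist only for finitely many $\beta$. The step I expect to require the most care is the local cusp analysis in the previous paragraph: one must verify that a non-immersion point of $\phi_r$ genuinely produces a singularity of the image algebraic curve $\gamma_r$ (rather than being resolved by a second branch of the complexified curve passing smoothly through it), and confirm that the critical points are not all swept to infinity for a cofinite set of $r$. Both are consequences of standard plane-curve facts, but they require careful bookkeeping with the local parametrization and the finite set of points of $\tilde\gamma$ lying over infinity.
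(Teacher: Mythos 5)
Your proposal is correct in outline and shares the paper's global strategy (reduce to: for non-circular algebraic $\gamma$ the complex parallel curves are singular for all but finitely many $r$, then contradict Theorem~\ref{main1}), but the mechanism you use for the key cofiniteness step is genuinely different from the paper's. The paper's proof is much softer: it observes that the defining polynomial $f_{\pm r}$ depends polynomially on $r$ as well as on $(x,y)$, so the singular locus $\{f_{+r}=\partial_x f_{+r}=\partial_y f_{+r}=0\}$ is an algebraic subset of $\mathbb{C}^3$ whose projection to the $r$-line is constructible; since classical differential geometry of fronts supplies \emph{real} cusps for a whole open interval of $r$ (the radii of curvature of the non-circular $\gamma$), that constructible set contains an interval and is therefore cofinite in $\mathbb{C}$. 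You instead complexify the curvature, use surjectivity of a non-constant meromorphic function on the compact model $\tilde\gamma$ to solve $k=1/r$ for every $r$, and do a local jet computation at the resulting non-immersion points. Your route buys independence from the (true but not entirely free) fact that the offset polynomial is algebraic in $r$, but it pays in exactly the places you flag, plus one you understate: (i) $k$ involves the square root $|\nabla f|$, so it is only a meromorphic function on a double cover of $\tilde\gamma$ (or one works with $k^2=1/r^2$, which suffices because Theorem~\ref{main1} forbids singularities of both $\gamma_{+r}$ and $\gamma_{-r}$); (ii) one must discard the finitely many critical values of $k$ and the finitely many $r$ for which all solutions lie over infinity or at the isotropic points $f_x^2+f_y^2=0$; (iii) the real issue at a non-immersion point is not a second branch passing through (that would only add to the multiplicity) but whether the branch itself is an honest cusp rather than a smooth branch doubly covered --- the computation $\phi_r''=-rk'\dot\gamma$ and $\phi_r'''=-rk''\dot\gamma-2rkk'\,n$ at $1-rk=0$ settles this when $k'\neq0$, since $k\neq0$ there --- and whether that cusp lies on the irreducible component containing the real oval, which follows from connectedness of the complexified parametrization. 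The paper's constructible-set argument sidesteps all of this local complex-analytic bookkeeping at the cost of importing the real-cusp fact and the polynomial dependence of $f_{\pm r}$ on $r$.
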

In many cases one can get non-integrability for all values of $\beta$, with no exception.
For instance, for ellipses the parallel curves appear to be singular curves of degree $8$ and therefore  we conclude:
\begin{corollary}\label{ellipse}Let $\Omega$ be the interior of the standard ellipse
	$$\partial\Omega=\left\{\frac{x^2}{a^2}+\frac{y^2}{b^2}=1\right\},\quad 0<b<a.$$
	Then for any magnitude of the magnetic field
	$\beta>2k_{\max}=\frac{2a}{b^2}$, the
	magnetic billiard in the ellipse does not admit a non-constant polynomial in momenta integral.
\end{corollary}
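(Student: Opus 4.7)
The strategy is a direct reduction to Theorem \ref{main1}. The hypothesis $\beta > 2a/b^2 = 2k_{\max}$ gives $r = 1/\beta < b^2/(2a) = 1/(2k_{\max})$, so the curvature assumption (\ref{assumption}) is in force and Theorem \ref{main1} applies. Suppose for contradiction that a non-constant polynomial integral exists; then $\gamma_{\pm r}$ must be real ovals of non-singular affine algebraic curves in $\mathbf{C}^2$. It therefore suffices to show that the parallel curves to the ellipse, viewed as affine complex curves, are singular.

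The first step is to derive the defining polynomial of $\gamma_{\pm r}$. One may eliminate $(x,y)$ from the system
\begin{align*}
	(X-x)^2 + (Y-y)^2 &= r^2,\\
	\frac{x^2}{a^2} + \frac{y^2}{b^2} &= 1,\\
	a^2 X y - b^2 Y x &= (a^2-b^2)\,xy,
\end{align*}
where the last equation encodes that $(X-x, Y-y)$ is proportional to the ellipse normal $(x/a^2, y/b^2)$ at $(x,y)$. A resultant computation (equivalently, algebraizing the parametric formula $(X, Y) = (a\cos t, b\sin t) + r(b\cos t, a\sin t)/\sqrt{b^2\cos^2 t + a^2\sin^2 t}$) yields a polynomial $P(X,Y) \in \mathbf{R}[X,Y]$ of degree $8$ vanishing on $\gamma_{\pm r}$.

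The core step is to show that $\{P=0\}\subset \mathbf{C}^2$ is singular. I would invoke the genus--degree gap. The complexified ellipse is rational, so its parallel curve is parametrized by an algebraic curve of small geometric genus (rational or, because of the square root entering the unit normal, elliptic of genus $\leq 1$). On the other hand, a smooth plane projective curve of degree $8$ has arithmetic genus $\binom{7}{2}=21$. Hence the projective closure of $\{P=0\}$ must carry singularities with total $\delta$-invariant close to $21$. A local analysis at infinity, notably at the circular points $(1:\pm\I:0)$ which lie on every offset curve, confirms that these cannot absorb the entire singular contribution, so at least one singular point survives in the affine chart $\mathbf{C}^2$. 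This contradicts Theorem \ref{main1} and finishes the proof.

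The main obstacle is this last bookkeeping step: precisely quantifying the singular contribution at infinity to guarantee that affine singularities remain. A cleaner computational alternative is to write $P(X,Y)$ explicitly (with a CAS) and directly locate points in $\mathbf{C}^2$ where $P = \partial_X P = \partial_Y P = 0$, for example by checking that the complexified evolute of the ellipse meets the offset curve in such points.
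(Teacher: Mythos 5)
Your reduction to Theorem \ref{main1} is exactly right, and so is the identification of the key task: showing that the degree-$8$ offset curve of the ellipse has a singular point in the \emph{affine} plane $\mathbb{C}^2$. But your primary route to that conclusion --- geometric genus at most $1$ versus arithmetic genus $\binom{7}{2}=21$, hence total $\delta$-invariant about $20$, hence ``the points at infinity cannot absorb it all'' --- is precisely the step you leave unproved, and it is not a harmless bookkeeping detail. Offset curves are notoriously singular at the circular points $(1:\pm\I:0)$ and at their other intersections with the line at infinity, and these points can and do carry large local $\delta$-invariants; without an actual computation of the multiplicities and local intersection numbers there, nothing prevents the entire genus defect from sitting on $\{z=0\}$. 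As written, the argument establishes only that the \emph{projective} closure is singular, which is not what Theorem \ref{main1} requires. So the proof is incomplete at its central point, and you say as much yourself.

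The paper closes this gap by the route you relegate to a ``cleaner computational alternative'': it quotes from Farouki--Neff \cite{FN} the explicit irreducible degree-$8$ polynomial $f_{+r}=f_{-r}$ of the offset and exhibits the affine singular points directly, namely
$$
\left(0,\ \pm\frac{\sqrt{b^2-a^2}\,\sqrt{a^2-r^2}}{a}\right),\qquad
\left(\pm\frac{\sqrt{a^2-b^2}\,\sqrt{b^2-r^2}}{b},\ 0\right),
$$
which lie in $\mathbb{C}^2$ for every $r$ with $1/r>2a/b^2$ (the first pair is genuinely complex since $b<a$). With those points in hand the contradiction with Theorem \ref{main1} is immediate. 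If you want to keep the genus-theoretic flavor, you would need to supplement it with an explicit local analysis at the four points of $\{\tilde f=0\}\cap\{z=0\}$ (the paper's section on infinite points shows these are tangencies or isotropic points, but does not bound their $\delta$-invariants); absent that, the explicit singular points are the honest proof.
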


The method of the proof of Theorem \ref{main1}, is analogous to the one used in \cite{BM0}, but requires certain modifications.
The main difference is that for strong magnetic field arbitrary close to the boundary, there exist tangent vectors such that the motion of the particle does not touch the boundary but is a rotation along Larmor circle inside the domain. We shall give details in Section \ref{S2}.

Our second result deals with the Gutkin property. 
We say that a magnetic billiard satisfies $\delta$-Gutkin property, $0<\delta<\pi$, if  any Larmor arc $C$ entering  $\Omega$ with the angle $\delta$ with $\gamma$,  exits $\Omega$ with the same angle $\delta$ with $\gamma$ as well.  

Ordinary plane billiard tables with this property were characterized by E. Gutkin.
Gutkin found, that this property holds if and only if the following equation is satisfied:
\begin{equation}\label{Gu}
\tan n\delta=n\tan\delta,
\end{equation}
	where 
	$n$ denotes the index of a non-vanishing Fourier coefficient of the radius of curvature of $\gamma$. Also, it was proved by Van Cyr in \cite{VC} that $\delta$ is incommensurate with $\pi$, so in particular, one cannot choose any arbitrary $\delta$ for a Gutkin type billiard. In contrast with the planar case, it was proved in \cite {B2, B3} that the only Gutkin billiards in higher dimensions are round spheres, independently of the number $\delta\in(0,\pi/2)$. Tabachnikov et.al. \cite{????} studied Gutkin billiards  on the 2-sphere, where this problem was addressed infinitesimally. It is shown, that for an infinitesimal deformation of the circle the relation described by equation (\ref{Gu}) also appears. However, as far as we know, the existence of such a deformation on the sphere was not achieved in \cite {????}. 

Here we prove the following 
\begin{theorem}\label{G}
	For every $\delta\in(0,\pi)$ there exists a non-circular magnetic billiard in the plane with Gutkin property.
\end{theorem}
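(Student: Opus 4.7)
The plan is constructive: for every $\delta\in(0,\pi)$ we produce a non-circular $\gamma$ whose magnetic billiard is $\delta$-Gutkin, by taking $\gamma$ to be a parallel curve of a suitable \emph{Wegner curve} $W$, a notion for which explicit non-circular examples are known from F.~Wegner's work on Zindler curves and floating bodies.

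First, we translate the $\delta$-Gutkin property into a condition on the locus of Larmor centers. Writing $t(P)$ and $\nu(P)$ for the unit tangent and inward unit normal of a candidate $\gamma$ at a point $P$, the Larmor circle of radius $r$ entering $\Omega$ at $P$ with angle $\delta$ has center
\[
C(P) \;=\; P - r\sin\delta\, t(P) + r\cos\delta\, \nu(P),
\]
while the same Larmor circle viewed as exiting $\Omega$ at its second intersection $Q$ with $\gamma$ has center
\[
C'(Q) \;=\; Q + r\sin\delta\, t(Q) + r\cos\delta\, \nu(Q).
\]
The $\delta$-Gutkin property for $\gamma$ is precisely the requirement $C(P)=C'(Q)$ for every such pair, so the two locus curves $\{C(P)\}$ and $\{C'(Q)\}$ coincide; denote the common curve by $W$.

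Next, reinterpret this coincidence as a Wegner/Zindler-type property of $W$. Introducing the standard parallel curve $\tilde\gamma = \gamma + r\cos\delta\,\nu$, the equality $C(P)=C'(Q)$ says that the tangential shifts of $\tilde\gamma$ by $\pm r\sin\delta$ trace the same curve. This endows $\tilde\gamma$ with an involution pairing points $P,Q$ whose chord has constant Euclidean length $\ell=2r\sin\delta$, with both endpoints at distance $r$ from the common Larmor center. Combining this with an area/perimeter balance along the curve yields exactly the defining property of a Zindler curve of chord-length $\ell$ in Wegner's circular-arc refinement, and identifies $W$ as the (standard) parallel curve of $\gamma$ at a constant signed offset $h$ determined by $r,\delta$; in the circular model of Example \ref{example} one gets $h=R-\sqrt{R^2+r^2-2Rr\cos\delta}$. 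Conversely, $\gamma$ can be recovered from any Wegner curve $W$ as its parallel curve at offset $h$ with the opposite sign convention.

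Wegner's work then supplies the required non-circular examples: using elliptic functions in polar coordinates, he produced a continuous family of non-circular simple closed Zindler curves of any prescribed chord length, perturbing the round circle non-trivially. Given $\delta\in(0,\pi)$, we choose $r$ small enough that (\ref{assumption}) is satisfied, pick a non-circular Wegner curve $W$ of chord length $2r\sin\delta$ from this family, and define $\gamma$ to be its parallel curve at offset $h$; by the reverse direction of the correspondence $\gamma$ is $\delta$-Gutkin, and openness of (\ref{assumption}) together with smoothness of parallel curves up to distance $2r$ (guaranteed by (\ref{assumption})) ensure $\gamma$ is smooth, simple, closed, and non-circular. The main obstacle is the middle step: establishing the exact equivalence between the magnetic Gutkin condition on $\gamma$ and the Zindler/Wegner condition on $W$, with matching chord-length $\ell$ and offset $h$. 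The linearization at the circular case of Example \ref{example} is routine, but the non-linear equivalence requires a careful term-by-term match with Wegner's integral-geometric characterization of Zindler curves; once in place, the theorem follows directly from Wegner's explicit elliptic-function construction.
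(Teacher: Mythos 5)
Your overall strategy is the paper's: pass to the locus $W$ of Larmor centers of the angle-$\delta$ arcs, recognize it as a Zindler curve, and run Wegner's explicit non-circular examples backwards. But there are two genuine problems. The step you yourself flag as ``the main obstacle'' --- the equivalence between the $\delta$-Gutkin condition and the Zindler property of $W$ --- is the entire content of the theorem and cannot be deferred; moreover it needs no ``area/perimeter balance'' or term-by-term match with Wegner's integral-geometric characterization. It is a short computation from ingredients you already wrote down: at a boundary point $P$ the two Larmor centers $C(P)=\tilde\gamma(P)-r\sin\delta\,t(P)$ and $C'(P)=\tilde\gamma(P)+r\sin\delta\,t(P)$ (centers of the arc arriving at $P$ and of the reflected arc leaving $P$) are both on $W$ once the Gutkin property holds, so they span an inscribed chord of $W$ of constant length $2r\sin\delta$ whose midpoint is $\tilde\gamma(P)=P+r\cos\delta\,\nu(P)$ and whose direction is $t(P)$; since $\tilde\gamma$ is the parallel curve of $\gamma$ at distance $r\cos\delta$, its velocity there is parallel to $t(P)$, i.e.\ to the chord --- which is exactly the Zindler property. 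Note the relevant constant-length chord is $[C(P),C'(P)]$, attached to a \emph{single} boundary point; it is not a chord ``pairing points $P,Q$'' (the chord of $\gamma$ joining the entry and exit points of a Larmor arc does not have constant length for non-circular $\gamma$), so that part of your dictionary is muddled.

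Second, your reconstruction is incorrect: $W$ is \emph{not} a parallel curve of $\gamma$ at a constant normal offset $h$, because the displacement $r\cos\delta\,\nu\pm r\sin\delta\,t$ has a tangential component; the identity $h=R-\sqrt{R^2+r^2-2Rr\cos\delta}$ is an artifact of rotational symmetry in the circular case. Consequently, taking the parallel curve of a non-circular Wegner curve at offset $h$ would not produce a Gutkin billiard. The correct inverse construction --- the one the paper uses --- is: take Wegner's Zindler curve $\Gamma$ for chord length $2r\sin\delta$, form the curve $\tilde\Gamma$ of \emph{midpoints} of its inscribed chords of that length (this is the ``front track'' of the Zindler/bicycle correspondence, not a parallel curve of $\Gamma$), and only then take $\gamma$ to be the parallel curve of $\tilde\Gamma$ at distance $r\cos\delta$. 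With that correction, and with the Zindler equivalence actually proved as above rather than postponed, your argument closes and coincides with the paper's.
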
 
 For the proof we use properties of the magnetic billiard map $\mathcal M$ and reduce the question to a very beautiful Wegner examples which provide solutions to the floating problem (S.Ulam problem number 19 of the Scottish book \cite{scottish}).
 On the other hand we tried the perturbation approach to the problem and prove that infinitesimally equation (\ref{Gu}) appears again. It is not clear to us if a genuine solution can be obtained this way.
 
 Proof and further details on Theorem \ref{G} are given in Section \ref{Gutkin}.

\section*{Acknowledgements}
It is a pleasure to thank Sergei Tabachnikov for references on Zindler and Wegner curves, and Lev Buhovsky for very fruitful discussions.

 \section {Preliminary notions and results}\label{S2}
 \subsection {Parallel curves}Let $s$ be an arc length parameter on $\gamma$, $J$ the complex structure on $\mathbb R^2$.
 Parallel curves are given by the formula:
 $$\gamma_t(s)=\gamma(s)+tJ\dot\gamma(s)=\gamma(s)+tn(s),$$
 where $n$ is the positive unit normal of $\gamma$. These curves are smooth embedded curves for $|t|<r_0$.
  \begin{remark}
 	The curves $\gamma_{\pm t}$ are also called equidistant curves, or
 	fronts, in singularity theory, or offset curves in computer aided
 	geometric design, CAGD, see \cite{SS}, \cite{SeS}.
 \end{remark}

 Denote by $k_{+r}$ and $k_{-r}$  the curvatures of
 the parallel curves $\gamma_{+r}$ and $\gamma_{-r}$ respectively.

	 \begin{lemma}\label{bounds}
	
	 \begin{enumerate}
	 	\item The curvature of the parallel curve $\gamma_t$ satisfies
	 	$$k({\gamma_t})=\frac{k(\gamma)}{1-t\cdot k(\gamma)}, $$ for all $t,  \text{ s.t } |t|<r_0$.
	 	\item 	For $t=\pm r$ we have:$$ |k_{\pm r}|<\beta.$$
	 \end{enumerate}
\end{lemma}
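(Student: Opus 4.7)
The plan is to derive part (1) by a direct computation with the Frenet frame, and then deduce part (2) by inserting the curvature hypothesis (\ref{assumption}) into the formula from part (1).

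For part (1), I would parametrize $\gamma$ by its arc length $s$, write $T = \dot\gamma$ for the unit tangent, and use $n = JT$ for the positive unit normal, so that the planar Frenet formulas read $\dot T = k n$ and $\dot n = -kT$. Differentiating the defining relation $\gamma_t(s) = \gamma(s) + tn(s)$ then gives
$$\dot\gamma_t(s) = T(s) + t\dot n(s) = \bigl(1 - tk(s)\bigr)T(s).$$
Since $|t| < r_0 \leq 1/\max|k|$, the factor $1 - tk(s)$ is strictly positive for every $s$, so $\gamma_t$ is a regular curve whose oriented unit tangent and normal coincide with $T$ and $n$, and whose arc length element is $ds_t = (1-tk)\,ds$. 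Combining $dT/ds = kn$ with the chain rule and the identity $dT/ds_t = k(\gamma_t)\, n$ then yields $k(\gamma_t) = k/(1-tk)$, which is the claimed formula.

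For part (2), I would use the two consequences of (\ref{assumption}) already singled out in the paper, namely $r\,\max|k| < 1/2$ and $\max|k| < \beta/2$. The first gives the pointwise bound $|1 \mp r\,k(s)| \geq 1 - r|k(s)| > 1/2$; inserting this into the formula from part (1) produces
$$|k_{\pm r}(s)| = \frac{|k(s)|}{|1 \mp r\,k(s)|} < 2|k(s)| \leq 2\max|k| < \beta,$$
which is the desired strict bound.

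There is no substantial obstacle here; the only point worth monitoring is the sign of $1 - tk$, which must remain positive so that orientations (and hence the signed curvature of $\gamma_t$) are unambiguously defined. This is precisely what $|t| < r_0$ guarantees in part (1), and the additional factor of $2$ in the hypothesis $r < r_0/2$ is exactly what upgrades the mere finiteness of $k_{\pm r}$ to the strict inequality $|k_{\pm r}| < \beta$ needed in part (2).
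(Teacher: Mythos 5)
Your proposal is correct and follows essentially the same route as the paper: the paper dismisses part (1) as ``an immediate computation'' (which you simply carry out via the Frenet frame), and for part (2) it uses exactly your estimate $1\pm rk>1-r\max|k|>\tfrac12$ followed by $|k_{\pm r}|<2\max|k|<1/r=\beta$. Your added remark that $1-tk>0$ for $|t|<r_0$ (so the orientation and signed curvature of $\gamma_t$ are well defined) is a sensible detail the paper leaves implicit.
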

\begin{proof}
	The first item is an immediate computation.
	For the second item, notice that for $t=\pm r$ we have for the denominator from (\ref{assumption})
	$$
	1\pm rk ({\gamma})>1-r\max |k(\gamma)|>\frac{1}{2}
	$$
	Therefore, 
	$$
	|k_{\pm r}|<2 \max k(\gamma)<\frac{1}{r}=\beta,
	$$
	again by assumption (\ref{assumption}).
	\end{proof}
 This shows, in particular, that any circle of radius $r$ with the
 center at $Q=\gamma(s)$ is tangent  to the inner boundary $\gamma_{+r}$  from
 outside at the point $\gamma(s)+rJ\dot{\gamma}(s)$ and 
 to the outer boundary $\gamma_{- r}$ from inside
 at $\gamma(s)-rJ\dot{\gamma}(s)$.
 Apart
 from these tangencies, this circle remains entirely inside the annulus
 $\Omega_r$ (see Fig. \ref{3}).

\subsection{Phase space of magnetic billiard: cylinder vs. annulus}
 
 The phase space of a magnetic billiard can be understood in two ways. 
 
 The first way is the usual one for Birkhoff billiards. Namely, denote by $\mathcal C$ the collection of all Larmor arcs  intersecting the boundary $\gamma=\partial \Omega$. Magnetic billiard map $\mathcal B$ transformes a Larmor arc $C_-$ of the collection $\mathcal C$  to the next one $C_+$ obtained by the billiard reflection at the exit point of $C_-$ (see Fig. \ref{2}). 
 \begin{figure}[h]
 	\centering
 	\includegraphics[width=0.5\linewidth]{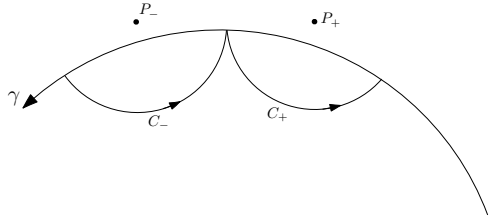}
 	\caption{Billiard reflection of $C_-$ to $C_+$.}
 	\label{2}
 \end{figure}
 The phase space can be identified with the space of inward unit vectors with foot point on the boundary $\gamma$, which is a cylinder with standard symplectic structure invariant under $\mathcal B$.
 
 The second appearance of the phace space is somewhat dual. 
Every Larmor circle intersecting $\gamma$ corresponds to a unique Larmor center. The set of all Larmor centers 
 fill the annulus which we denote by $\Omega_r$. The boundary of the annulus $\Omega_r$ consists of two parallel curves $\gamma_{+r}$,
 $\gamma_{-r}$ of $\gamma$, as in the previous subsection.

Moreover, the magnetic billiard map $\M:\Omega_r\rightarrow\Omega_r$ acts
by the following rule: Let $C_-(\epsilon)$  be an arc of Larmor circle exiting $\Omega$ at the point $Q$ with the angle $\epsilon$,  $\epsilon\in(0,\pi)$ (everything is oriented counterclockwise). Let $C_+(\epsilon)$ be the reflected arc. Then the Larmor
centers of $C_-(\epsilon)$, $C_+(\epsilon)$ are given by
\begin{equation}\label{a}
P_-(\epsilon)=Q+rJR_{-\epsilon}\dot\gamma(Q)=Q+rR_{\pi/2-\epsilon}\dot\gamma(Q),\quad
\end{equation}
$$P_+(\epsilon)=Q+rJR_{\epsilon}\dot\gamma(Q)=Q+rR_{\pi/2+\epsilon}\dot\gamma(Q),$$
where $R_\epsilon$ is a counterclockwise rotation by angle $\epsilon$.

Now it is clear how mapping $\mathcal M$
acts. 
Let  $C$ be the circle of radius $r$ centered at $Q$. It is tangent to the curves $\gamma_{+ r}$ and  $\gamma_{- r}$ at the points $P'$, $P''$ respectively. Then the points $P_-(\epsilon)$ and $P_+(\epsilon)=:\mathcal M(P_-)$ 
lie on $C$ and in a symmetric way with respect to $P'P''$ (see Fig.\ref{3}).

\begin{figure}[h]
	\centering
	\includegraphics[width=0.5\linewidth]{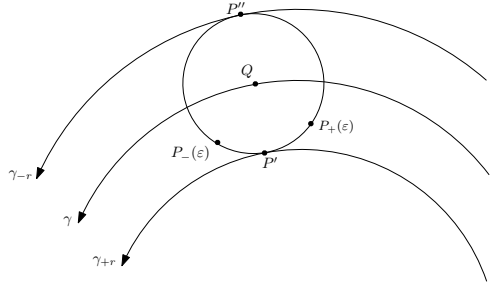}
	\caption{Larmor centers $P_{\pm}$ lie symmetrically on the circle centered at $Q\in\gamma$.}
	\label{3}
\end{figure}
The map $\M:\Omega_r\rightarrow\Omega_r$ preserves the
standard symplectic form in the plane (see \cite{BM0} for the proof), and thus $\Omega_r$ naturally
becomes the phase space of the magnetic Birkhoff billiard.  Notice that on the boundaries
$\gamma_{\pm r}$, the map $\M$ acts as the identity map.
\subsection{Polynomial integrals}
Let us remark that all possible arcs of the collection $\mathcal C$ (i.e. all Larmor arcs lying inside $\Omega$ which intersect $\partial\Omega$) sweep 
the annulus bounded by the curves $\gamma$ and $\gamma_{+2r}$ which we denote by $\Omega_{+}$.
We are concerned with the existence of first integrals polynomial in
the velocities for magnetic billiard flow.
\begin{definition}\label{def} Let $\Phi:T_{x}\Omega_{+}\rightarrow\mathbb{R}$
	be a function on the unit tangent bundle, $\Phi=\sum_{0\leq k+l\leq N}
	a_{kl}(x)v_1^kv_2^l$, which is a polynomial in the components $v_1,v_2$ of $v$
	with coefficients
	which are smooth up to the boundary, $a_{kl}\in C^\infty(\overline\Omega_{+}).$ We
	call $\Phi$ a polynomial integral of the magnetic billiard flow if the
	following conditions hold:
	
	1. $\Phi$ is an integral of the magnetic flow $g^t$ between the collisions, i.e. for every $x\in\gamma$ and any unit inward vector $v$ at $x$ we have
	$$\Phi(g^t(x,v))=\Phi(x,v) $$
	
	2. $\Phi$ is preserved under the reflections of the
	boundary $\partial\Omega$, i.e, for any point
	$x\in\partial\Omega,$
	$$\Phi(x,v)=\Phi(x,v-2\langle n,v\rangle n),$$ for any $v\in T_x\Omega \text{ s.t } |v|=1,$
	where $n$ is the unit normal to $\partial\Omega$ at $x$.
\end{definition}

Let us denote by $\mathcal L$ the mapping assigning the center of Larmor circle and any unit tangent vector to the circle:
\begin{equation}\label{L}
\mathcal L(x,v)=x+rJv.
\end{equation}
Given a polynomial integral $\Phi=\sum_{ 0\leq k+l\leq N}
a_{kl}(x)v_1^kv_2^l$ of the magnetic billiard, we define the
function $F:\Omega_r\rightarrow\Omega_r$ by the requirement
\begin{equation}\label{F}
F\circ{\mathcal L}=\Phi.
\end{equation}
 This is a well-defined construction, since $\Phi$ is an
integral of the magnetic flow, and therefore takes constant values
on any Larmor circle. Moreover, since $\Phi$ is invariant under the
billiard flow, $F$ is invariant under the billiard map $\M$:
$$
F\circ\M=F.
$$

 Furthermore, we claim the following fact.
 \begin{theorem}\label{smooth}
 	Function $F$ is $C^{\infty}$-smooth on $\overline{\Omega_{r}}$.
 \end{theorem}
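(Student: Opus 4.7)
My plan is to show smoothness of $F$ in a neighborhood of each point $p_0 \in \overline{\Omega_r}$. The central identity is
\[
F(p) = H(s, p), \qquad H(s, p) := \Phi\bigl(\gamma(s),\, \tfrac{1}{r}J(\gamma(s) - p)\bigr),
\]
valid for every $s$ such that $\gamma(s)$ lies on the Larmor circle around $p$, i.e., every root of
\[
\phi(s, p) := |p - \gamma(s)|^2 - r^2.
\]
Indeed $\mathcal L(\gamma(s), \tfrac{1}{r}J(\gamma(s) - p)) = p$ as soon as $|p - \gamma(s)| = r$, so this is just the defining relation $F \circ \mathcal L = \Phi$; when two roots are present, the flow invariance of $\Phi$ along the arc of the Larmor circle inside $\overline{\Omega_+}$ forces the two $H$-values to coincide. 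Since $\gamma(s) \in \overline{\Omega_+}$, the function $H$ is manifestly smooth, and the smoothness of $F$ reduces to analyzing how the roots of $\phi(\cdot, p) = 0$ depend on $p$.

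For $p_0$ in the interior of $\Omega_r$, the Larmor circle meets $\gamma$ transversely, so $\partial_s \phi(s_0, p_0) \neq 0$; the implicit function theorem furnishes a smooth root $s(p)$, and $F(p) = H(s(p), p)$ is smooth. The delicate case is $p_0 \in \gamma_{\pm r}$, where the Larmor circle is tangent to $\gamma$ at $\gamma(s_0)$ and $\partial_s \phi(s_0, p_0) = 0$. Using the Frenet formulas and the curvature bound of Lemma \ref{bounds},
\[
\partial_s^2 \phi(s_0, p_0) = 2\bigl(1 \mp r\, k(\gamma(s_0))\bigr) > 1,
\]
so $\phi(\cdot, p_0)$ has a non-degenerate minimum at $s_0$. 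The Morse lemma with parameters applied to $\phi$ produces smooth functions $\tilde s(s, p)$ and $c(p)$, with $c(p_0) = 0$, such that
\[
\phi(s, p) = \tilde s(s, p)^2 - c(p)
\]
near $(s_0, p_0)$; the two roots are $\tilde s_{\pm} = \pm\sqrt{c(p)}$, real precisely when $p$ lies (locally) in $\overline{\Omega_r}$.

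Finally, setting $\tilde H(\tilde s, p) := H(s(\tilde s, p), p)$ and applying the standard Whitney--Hadamard decomposition into even and odd parts in $\tilde s$,
\[
\tilde H(\tilde s, p) = A(\tilde s^2, p) + \tilde s\, B(\tilde s^2, p), \qquad A, B \in C^\infty,
\]
I obtain
\[
F(p) = \tfrac{1}{2}\bigl(\tilde H(\sqrt{c(p)}, p) + \tilde H(-\sqrt{c(p)}, p)\bigr) = A(c(p), p),
\]
which is smooth in $p$. The main obstacle is exactly this $\gamma_{\pm r}$ case: the individual roots $\pm\sqrt{c(p)}$ are not smooth as $c \downarrow 0$, and only their symmetric combinations are; flow invariance is what forces both roots to yield the same value of $F$ on the interior, which legitimizes the averaging and cancels the would-be $\sqrt{c}$ singularities.
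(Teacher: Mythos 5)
Your proof is correct, but it takes a genuinely different route from the paper's. The paper fixes the \emph{velocity} $w_0$ and lets the foot point range over the two--dimensional set $\Omega_{+}$: since $\mathcal L(\cdot,w_0)$ is then just the translation $x\mapsto x+rJw_0$, one gets $F(y)=\Phi(y-rJw_0,w_0)$ near $y_0$, which is smooth simply because the coefficients $a_{kl}$ are smooth up to $\partial\Omega_{+}$ --- no implicit--function analysis at all, and the boundary case is handled by the same translation with $x_0=\gamma(s_0)$. You instead keep the foot point on the one--dimensional curve $\gamma$, so smoothness of $F$ becomes a question about how the roots of $|\gamma(s)-p|=r$ depend on $p$: transversality settles interior points, and the tangency at $\gamma_{\pm r}$ is resolved by the parametric Morse lemma together with Whitney's even--function theorem, the well--definedness of $F$ (flow invariance) being exactly what eliminates the odd, non-smooth branch $\pm\sqrt{c(p)}\,B$. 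Your route costs more machinery, but it is airtight precisely where the paper is terse: with the paper's choice $w_0=\dot\gamma(s_0)$ at $y_0\in\gamma_{+r}$, the representative $y-rJw_0$ leaves $\overline{\Omega_{+}}$ for $y$ on the $\Omega_r$--side of $\gamma_{+r}$, so the identity $F(y)=\Phi(y-rJw_0,w_0)$ there needs an extra word (or the alternative choice $w_0=n(s_0)$, which places the foot point $y_0+r\dot\gamma(s_0)$ in the interior of $\Omega_{+}$ and reduces the boundary case to the interior one); your Morse--Whitney argument sidesteps this entirely, and as a bonus uses only the restriction of $\Phi$ to vectors based on $\gamma$.
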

  We prove Theorem \ref{smooth} in Section \ref{proofs}.
 Notice that by the very definition of function $F$ it has a remarkable property:
 
 {\it Function F restricted to any circle $$C_s:=\{x: |x-\gamma(s)|=r\}$$ is a restriction to $C_s$ of a polynomial in two variables of degree at most $N$ (and hence is a trigonometric polynomial of degree at most $N$}).
 
 To see this, fix a point $z=\gamma(s)$ and take all possible unit tangent vectors $v$ at $z$. The corresponding $y=\mathcal L(z,v)$ form the circle $C_s$ defined above.
 By definition, $F$ restricted to $C_s$ has the form 
 
 $$
 F(y)= \Phi(z, \frac{1}{r}J(z-y)),
 $$
 so it is the restriction of a polynomial function in $y$ of degree at most $N$, hence a trigonometric polynomial on $C_s$.

 Next we state the following, the proof is given in Section \ref{proofs}:
 \begin{theorem}\label{harmonics}
 	Let $F\in C^\infty(\overline{\Omega_{r}})$ is such that its restriction to any circle $C_s$ is the restriction of a  polynomial of degree at most $N$. It then follows that $F$ is a polynomial function in the variables $(x_1,x_2)$ of degree at most $2N$. 
 \end{theorem}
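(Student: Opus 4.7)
The plan is to analyze $F$ via its Fourier expansion on each circle $C_s$ and exploit the smoothness of $F$ on $\overline{\Omega_r}$ --- regarded as a two-to-one quotient of the cylinder $\gamma \times S^1$ by the natural projection $\pi\colon(s,\theta)\mapsto\gamma(s)+re^{i\theta}$, with branch locus $\gamma_{\pm r}$.

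By the hypothesis and smoothness of $F$, I first write
\[
\tilde F(s,\theta) := F(\gamma(s) + re^{i\theta}) = \sum_{|k|\leq N} c_k(s)\, e^{ik\theta},
\]
with smooth $c_k\colon \gamma \to \mathbf{C}$. For $\tilde F$ to descend through $\pi$, it must be invariant under the deck involution. Writing the two preimages of an interior point $y \in \Omega_r$ as $(s^\mp, \theta^\mp)$, this yields the consistency identity
\[
\sum_{|k|\leq N} c_k(s^-)\, e^{ik\theta^-} = \sum_{|k|\leq N} c_k(s^+)\, e^{ik\theta^+}.
\]

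The core of the argument converts this identity into differential constraints on the $c_k$. In local coordinates where $\gamma$ is parameterized by arc length $s$, the preimages satisfy $s^\pm = x_1 \pm r\cos\theta$ and $\theta^+ = \pi - \theta^-$, so $e^{ik\theta^+} = (-1)^k e^{-ik\theta^-}$. Setting $\psi = \pi/2 - \theta$ and Taylor-expanding the consistency identity in $\psi$ around $\psi = 0$ (near the envelope $\gamma_{+r}$, where the two sheets coalesce), order-by-order matching of coefficients yields a hierarchy of linear ODEs among the derivatives of the $c_k$. Systematic solution of this hierarchy forces each $c_k(s)$ to be a polynomial in the local parameter of $\gamma$ of degree at most $2N - |k|$. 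The case $N = 1$ is illustrative and within reach by direct calculation: one separates the identity into parts even and odd in $\sin\theta$ to obtain $c_1 - c_{-1}$ constant, $c_1 + c_{-1} = r c_0'$, and $c_0''' = 0$, so $c_0$ is a polynomial of degree $\leq 2$ and $c_{\pm 1}$ are polynomials of degree $\leq 1$.

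With the $c_k$ now polynomial, I would reconstruct $F$ by substituting back into $\tilde F(s,\theta) = \sum c_k(s)e^{ik\theta}$ and re-expressing in Cartesian coordinates via $y = \gamma(s) + re^{i\theta}$. Using $|y - \gamma(s)|^2 = r^2$ to eliminate the non-polynomial $\cos\theta$-dependence (all such pieces cancel precisely because of the identities extracted from the hierarchy), one obtains a polynomial in $(x_1, x_2)$ of total degree at most $2N$ --- the doubling arising from combining $c_k(s)$ of degree $\leq 2N - |k|$ with the degree-$|k|$ contribution of $e^{ik\theta}$ in the Cartesian variables. The principal obstacle is the combinatorial analysis of the ODE hierarchy, which grows intricate with $N$ even though the $N = 1$ pattern is transparent and suggests the general bookkeeping.
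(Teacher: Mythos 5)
There is a genuine gap, and it sits exactly where you defer the work. Your explicit description of the deck involution --- $s^{\pm}=x_1\pm r\cos\theta$, $\theta^{+}=\pi-\theta^{-}$ --- is valid only when $\gamma$ is a straight line; for a genuinely curved $\gamma$ the two preimages of a point $y$ are the two solutions of $|y-\gamma(s)|=r$, and their relation is transcendental, with no closed form in terms of $s$ and $\theta$. Consequently the ``hierarchy of linear ODEs'' is never actually derived: your $N=1$ computation ($c_1-c_{-1}$ constant, $c_1+c_{-1}=rc_0'$, $c_0'''=0$) is a computation in the flat model, not for the curve at hand. The second unproven step is the reconstruction: even if each $c_k$ were a polynomial in arc length $s$, the substitution $y=\gamma(s)+re^{i\theta}$ does not produce a polynomial in $(x_1,x_2)$ unless $\gamma$ itself is algebraic --- which is not a hypothesis here (it is a *consequence* of this theorem, obtained downstream via Proposition \ref{prop}). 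The sentence ``all such pieces cancel precisely because of the identities extracted from the hierarchy'' is precisely the content that would need proof, and nothing in the proposal supplies it.

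For comparison, the paper's proof avoids the cylinder parametrization entirely and is an induction on $N$. One localizes to $A_{s_0,\epsilon}=\bigcup_{|s-s_0|\le\epsilon}C_s$, subtracts from $F$ the degree-$N$ polynomial $F_0$ that matches it on a reference circle $C_{s_0}=\{x_1^2+x_2^2=r^2\}$, and applies Hadamard's lemma to write $F-F_0=(x_1^2+x_2^2-r^2)G$ with $G$ smooth. The whole point is then to show $G$ satisfies the hypothesis with $N-1$ in place of $N$: restricting to a nearby circle centered at $(a,0)$ one gets $(F-F_0)|_{C_s}=(a^2+2ar\cos t)\,g$, and comparing Fourier coefficients yields the three-term recurrence $rg_{k+1}+ag_k+rg_{k-1}=0$ for $|k|>N$, whose characteristic roots are unimodular since $|a|<2r$; continuity of $g$ (decay of Fourier coefficients) then forces $g_N=g_{N+1}=0$, so $g$ is a trigonometric polynomial of degree $\le N-1$. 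Induction finishes the argument, with the factor $(x_1^2+x_2^2-r^2)$ accounting for the degree doubling $N\mapsto 2N$. If you want to rescue your approach, you would need to replace the explicit flat-model involution by this kind of comparison between two circles, which is in effect what Hadamard's lemma accomplishes.
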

 
 \begin{remark}
It is plausible that the statement of Theorem \ref{harmonics} remains valid for lower regularity, say for continuous functions. It was indeed the case for magnetic billiards in weak magnetic fields \cite{BM0}. Also, we conjecture that the complete circles  in formulation of Theorem \ref*{harmonics} can be replaced with arcs of radius $r$. If this conjecture holds, one can extend Theorem \ref*{harmonics} to functions $F\in C(\Omega_r)$, which means that the coefficients  of the first integral $\Phi$ in Definition \ref{def} can be assumed only continuous. 
 \end{remark}

Moreover, we have the following:
\begin{proposition}\label{prop}
Suppose that the magnetic billiard in $\Omega$ admits a polynomial
integral $\Phi$ and let $F$ be the corresponding polynomial on
$\overline{\Omega_r}$. Then 
$$F_|\gamma_{\pm r}={\rm const}.$$
\end{proposition}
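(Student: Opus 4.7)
The plan is to differentiate the invariance $F\circ\mathcal M = F$ along a one-parameter family of Larmor centers that collapses to a common point of $\gamma_{\pm r}$. Although $\mathcal M$ acts as the identity on each of these boundary curves, pairing $P_-(s,\epsilon)$ with its image $P_+(s,\epsilon) = \mathcal M(P_-(s,\epsilon))$ and letting $\epsilon \to 0$ (respectively $\epsilon \to \pi$) produces a non-trivial first-order relation, because the two families approach $\gamma_{+r}$ (respectively $\gamma_{-r}$) with opposite tangent vectors.

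First I would rewrite formulas (\ref{a}) using $J\dot\gamma = n$ as
\[
P_\pm(s,\epsilon) = \gamma(s) \pm r\sin\epsilon\, \dot\gamma(s) + r\cos\epsilon\, n(s),
\]
so that $P_-(s,0) = P_+(s,0) = \gamma_{+r}(s)$ and $P_-(s,\pi) = P_+(s,\pi) = \gamma_{-r}(s)$, while $\mathcal M(P_-(s,\epsilon)) = P_+(s,\epsilon)$ for each $\epsilon \in (0,\pi)$. Invariance of $F$ then reads $F(P_-(s,\epsilon)) = F(P_+(s,\epsilon))$ identically in $s$ and $\epsilon$. By Theorem \ref{smooth}, $F \in C^\infty(\overline{\Omega_r})$, so I may differentiate this identity in $\epsilon$ and pass to the limits $\epsilon = 0$ and $\epsilon = \pi$.

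Differentiation at $\epsilon = 0$, using $\partial_\epsilon P_\pm|_{\epsilon=0} = \pm r\dot\gamma(s)$, yields
\[
\nabla F(\gamma_{+r}(s)) \cdot \dot\gamma(s) = -\,\nabla F(\gamma_{+r}(s)) \cdot \dot\gamma(s),
\]
hence $\nabla F(\gamma_{+r}(s)) \perp \dot\gamma(s)$. Since the curvature bound (\ref{assumption}) gives $1 - rk(s) > 0$ and $\dot\gamma_{+r}(s) = (1 - rk(s))\dot\gamma(s)$, this is equivalent to $\tfrac{d}{ds}F(\gamma_{+r}(s)) = 0$, so $F$ is constant along $\gamma_{+r}$. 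The same argument at $\epsilon = \pi$, using $\partial_\epsilon P_\pm|_{\epsilon=\pi} = \mp r\dot\gamma(s)$ and $\dot\gamma_{-r}(s) = (1 + rk(s))\dot\gamma(s)$, handles $\gamma_{-r}$. The only delicate ingredient is the $C^\infty$-regularity of $F$ up to the boundary of $\Omega_r$, where Larmor arcs become grazing; this is precisely what Theorem \ref{smooth} supplies, and it is the step without which the passage to $\epsilon = 0,\pi$ could not be legitimately taken.
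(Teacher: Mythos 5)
Your argument is correct and is essentially the paper's own proof: both differentiate the invariance identity $F(P_-(\epsilon))=F(P_+(\epsilon))$ in $\epsilon$ at $\epsilon=0$ and $\epsilon=\pi$, observe that the two families reach $\gamma_{\pm r}$ with opposite tangent vectors $\pm r\dot\gamma(Q)$, and conclude that $dF$ annihilates the tangent direction of the parallel curves. Your only deviations are cosmetic (a swapped $\pm$ labeling of $P_\pm$ relative to formula (\ref{a}), and the explicit factor $1-rk>0$ relating $\dot\gamma_{+r}$ to $\dot\gamma$, which the paper leaves implicit).
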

We prove this proposition in section \ref{proofs}.
\begin{remark}\label{r}
We shall assume below that
the polynomial integral $F$ of $\M$ is such that the ${\rm
constants}$ in Proposition \ref{prop} is $0,$ for both parallel
curves $\gamma_{\pm r}$. Indeed, if $F_|\gamma_{- r}=c_1$ and
$F_|\gamma_{+ r}=c_2,$ one can replace $F$ by
$F^2-(c_1+c_2)F+c_1\cdot c_2$ to annihilate both constants
$c_1,c_2.$ 
\end{remark}

Proposition \ref{prop} and Remark \ref{r} imply that
$$\gamma_{\pm r}\subset \{F=0\},$$
and thus $\gamma_{\pm r}$ is contained in the algebraic curve
$\{F=0\}$. This fact implies then, that $\gamma$ itself is algebraic.
In the proof of our main theorem we denote by $f_{\pm r}$ the minimal defining
polynomials of the irreducible component in $\mathbb{C}^2$
containing $\gamma_{\pm r}$ respectively. Since the curves
$\gamma_{\pm r}$ are real, $f_{\pm r}$ have real coefficients.
Notice that it may happen that both $\gamma_{\pm r}$ belong to the
same component, so that $f_{+r}=f_{-r}.$ For instance, this is the
case for parallel curves to $\gamma$ when $\gamma$ is an ellipse
\cite {FN}, \cite{SS}, \cite{SeS}. In this case $f_{-r}=f_{+r}$ is
an irreducible polynomial of degree $8$.




\section{Gutkin magnetic billiards}\label {Gutkin}
In this section we study magnetic Gutkin billiards.

\subsection{Geometric approach}
Our geometric construction uses the properties of the magnetic billiard map $\mathcal M$.
Namely, let $\Omega$ be a magnetic billiard domain with the Gutkin property corresponding to the angle $\delta$.
Denote by $\Gamma\subset\Omega_r$ the curve consisting of Larmor centers of all the arcs having angle $\delta$ with the boundary. Magnetic billiard has $\delta$-Gutkin property if and only if the curve $\Gamma$ is invariant under $\mathcal M$.
\begin{theorem}
	It then follows that $\Gamma$ is a Zindler curve (we use the terminology of \cite{LPT}). Namely, moving  the segment inscribed into $\Gamma$ of constant length 
	$$
	L=2r\sin\delta
	$$ 
	around $\Gamma$ is such that the velocity of the midpoint $M$ of the segment is necessarily parallel to the segment. The midpoints of these segments form the curve $\tilde\Gamma$ which is parallel to $\gamma$ at the distance $r\cos\delta$.
\end{theorem}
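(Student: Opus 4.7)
The plan is to unwind the explicit formulas (\ref{a}) for $P_\pm(\epsilon)$ by direct computation and to read off the Zindler property from the offset geometry of parallel curves. Parametrize $\gamma$ by arc length $s$, set $Q=\gamma(s)$, fix $\epsilon=\delta$, and write $P_\pm(s)$ for the Larmor centers $P_\pm(\delta)$ attached to $Q$. The $\delta$-Gutkin hypothesis says precisely that every Larmor arc entering $\gamma$ at angle $\delta$ exits at angle $\delta$, so $\M(P_-(s))=P_+(s)$; both points therefore lie on $\Gamma$ and the segment $P_-(s)P_+(s)$ is a chord of $\Gamma$.

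Next, a two-line $2\times 2$ matrix computation gives the rotation identities
$$R_{\pi/2+\delta}-R_{\pi/2-\delta}=-2\sin\delta\cdot I,\qquad R_{\pi/2+\delta}+R_{\pi/2-\delta}=2\cos\delta\cdot J.$$
Substituting into (\ref{a}) yields at once
$$P_+(s)-P_-(s)=-2r\sin\delta\,\dot\gamma(s),\qquad M(s):=\tfrac12\bigl(P_-(s)+P_+(s)\bigr)=\gamma(s)+r\cos\delta\cdot n(s).$$
The first identity shows that every such chord of $\Gamma$ has the constant length $L=2r\sin\delta$ and is parallel to $\dot\gamma(s)$. The second identifies the midpoint locus with the parallel curve at (signed) distance $r\cos\delta$, that is $\tilde\Gamma=\gamma_{r\cos\delta}$.

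The Zindler property is then immediate from the offset formula for parallel curves. Differentiating $\gamma_t(s)=\gamma(s)+tn(s)$ together with $\dot n(s)=-k(s)\dot\gamma(s)$ gives $\dot\gamma_t(s)=(1-tk(s))\dot\gamma(s)$, and for $t=r\cos\delta$ the scalar $1-r\cos\delta\cdot k(s)$ is nonzero (indeed positive) by assumption (\ref{assumption}). Hence $\dot M(s)$ is a nonzero scalar multiple of $\dot\gamma(s)$, which is in turn a scalar multiple of the chord $P_+(s)-P_-(s)$; so the velocity of the midpoint is parallel to the chord, which is the Zindler condition.

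I do not expect a genuine obstacle: the whole argument reduces to the two trigonometric identities for $R_{\pi/2\pm\delta}$ and to the standard offset formula for parallel curves, all of which are one-line computations. The only point that merits a sentence of justification is that the two parametrizations $s\mapsto P_-(s)$ and $s\mapsto P_+(s)$ trace out the same curve $\Gamma$; this, however, is precisely the $\M$-invariance of $\Gamma$ noted at the start of the section, which is equivalent to the $\delta$-Gutkin property.
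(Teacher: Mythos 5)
Your proof is correct and establishes exactly the same three facts as the paper's argument — the chord $P_-P_+$ has constant length $2r\sin\delta$ and is parallel to $\dot\gamma(s)$, the midpoint locus is the parallel curve $\gamma_{r\cos\delta}$, and the tangent to that parallel curve is again parallel to $\dot\gamma(s)$ — so it is essentially the same approach, merely replacing the paper's synthetic isosceles-triangle argument at $P'$ with the equivalent rotation-matrix identities for $R_{\pi/2\pm\delta}$. No gaps; the positivity of $1-r\cos\delta\cdot k$ under assumption (\ref{assumption}) is checked correctly.
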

\begin{proof}
	Consider the triangle $\Delta P_-P'P_+$ (see Fig. \ref{4}). 
	It is isosceles with the angle $(\pi-\delta)$ at the vertex $P'$. This is because the arcs $P_-P'$ and $P'P_+$ of the circle centered at $Q$ both have central angle $\delta$, since the Larmor circle $C_-$ 
	is reflected to $C_+$ with the angle of incidence $\delta$. So $$|P_-P'|=|P'P_+|=2r\sin\delta /2.$$
	Notice that this triangle is rigid, hence moves isometrically when the segment moves around $\Gamma$.
	
	Next we need to consider the velocity vector of the midpoint $M$. 
	Notice that the segment $MP'$ has constant length 
	$$
	|MP'|= |P_-P'|\cos(\pi/2-\delta/2)=2r \sin^2(\delta/2),
	$$ 
	hence  the point $M$ moves over the parallel curve $\gamma_{t}$ where $$t=r-2r\sin^2\delta/2=r\cos\delta.$$
	
	Therefore, the velocity vector of $M$ is parallel to the tangent vector of $\gamma_{+ r}$ at $P'$ and thus  the midpoint $M$ moves orthogonally 
	to $P'M$, hence parallel to the segment. This is precisely Zindler property.
	
	In addition the distance to $\gamma$ from $\tilde\Gamma$ equals $r\cos\delta$.
\end{proof}
\begin{figure}[h]
	\centering
	\includegraphics[width=0.5\linewidth]{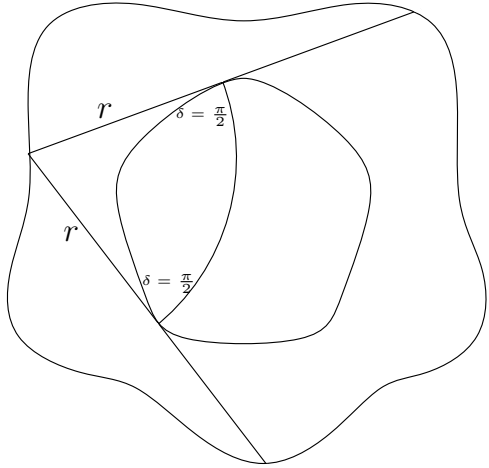}
	\caption{Magnetic billiard with Gutkin property for $\delta=\pi/2$.}
	\label{6}
\end{figure}
\begin{corollary}
	Suppose $\delta=\pi/2$. In this case $r\cos\delta=0$ and we have by the construction, that the curve $\tilde\Gamma$ coincides with $\gamma$. Thus for Wegner curve $\Gamma$ constructed for the length $2r$ of the inscribed segment, the curve of the midpoints of the segments is an example of Gutkin magnetic  billiard. This fact can be easily seen geometrically. (Fig.\ref{6})
\end{corollary}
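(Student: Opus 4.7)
The plan is to verify the converse of the preceding theorem in the specific case $\delta=\pi/2$. Starting from a Wegner/Zindler curve $\Gamma$ whose inscribed chord has constant length $2r$ and midpoint curve $\gamma$, I aim to show that the magnetic billiard of Larmor radius $r$ inside the domain bounded by $\gamma$ has the $\pi/2$-Gutkin property. This amounts to running the argument of the theorem backwards, and for $\delta=\pi/2$ the geometry is rigid enough that the implication is essentially immediate.

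First I would fix an inscribed chord $P_-P_+$ of $\Gamma$ with $|P_-P_+|=2r$ and midpoint $M\in\gamma$. The Zindler property gives that the tangent to $\gamma$ at $M$ is parallel to the chord. Since $|P_\pm M|=r$, each of the two Larmor circles of radius $r$ centered at $P_-$ and at $P_+$ passes through $M$, and its radius vector $P_\pm M$ is parallel to the tangent to $\gamma$. Consequently, each of these two Larmor circles meets $\gamma$ orthogonally at $M$.

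Next I would identify these two Larmor arcs as successive arcs of a genuine billiard trajectory. Applying formula (\ref{a}) with $Q=M$ and $\epsilon=\pi/2$ yields $P_-(\pi/2)=M+r\dot\gamma(M)$ and $P_+(\pi/2)=M-r\dot\gamma(M)$, which (up to labelling) coincide with the two endpoints of our chord. Hence the Larmor arc centered at $P_-$ exits $\Omega$ at $M$ with angle $\pi/2$, is reflected by the magnetic billiard rule into the Larmor arc centered at $P_+$, and the latter also makes angle $\pi/2$ with $\gamma$ at $M$. As $M$ runs over $\gamma$ the chord through $M$ sweeps out every Larmor arc meeting $\gamma$ at angle $\pi/2$, so the $\pi/2$-Gutkin property holds for all such incoming arcs.

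The hard part is not the geometric argument, which is a short chase of tangent directions, but rather confirming that the Wegner construction actually produces a \emph{valid} magnetic billiard: the midpoint curve $\gamma$ must be simple, smooth, and satisfy the curvature assumption (\ref{assumption}), so that every Larmor arc hitting $\gamma$ transversally really does reflect. These regularity requirements depend on the concrete Wegner examples (and will hold provided the deviation from a round circle is small enough), but they are separate from the geometric content above.
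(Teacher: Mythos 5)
Your argument correctly extracts from the Zindler property that the chord through a midpoint $M\in\gamma$ is $[M-r\dot\gamma(M),\,M+r\dot\gamma(M)]$, so that the Larmor circles centered at its two endpoints meet $\gamma$ orthogonally at $M$. But the concluding step does not yet give the Gutkin property. What you establish at $M$ is only that the arc \emph{exiting} at $M$ at angle $\pi/2$ reflects into an arc \emph{entering} at $M$ at angle $\pi/2$; that is the reflection law itself, it holds at every point of every boundary curve, and it uses nothing about $\Gamma$. The $\pi/2$-Gutkin property is a statement about a single arc at \emph{two different} boundary points: an arc entering orthogonally at $A$ must exit orthogonally at its next intersection $B\neq A$ with $\gamma$. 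Your chase of tangent directions never locates the exit point of the entering arc centered at $P_+=M-r\dot\gamma(M)$, so the sentence ``the $\pi/2$-Gutkin property holds for all such incoming arcs'' does not follow from what precedes it.

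The missing idea is a double incidence on the Wegner curve. As the chord of length $2r$ moves around the closed curve $\Gamma$, each point $P\in\Gamma$ is visited by \emph{both} ends of the chord, i.e.\ $P=A-r\dot\gamma(A)$ for the chord with midpoint $A$ and $P=B+r\dot\gamma(B)$ for a different chord with midpoint $B$. Hence the one Larmor circle centered at $P$ is orthogonal to $\gamma$ at the two distinct points $A$ and $B$, and (checking orientations against (\ref{a}) at $\epsilon=\pi/2$) it enters $\Omega$ at $A$ and exits at $B$. Under assumption (\ref{assumption}) such a circle meets $\gamma$ in exactly two points, so $A$ and $B$ are precisely the entry and exit points of the billiard arc, and the exit angle equals $\pi/2$. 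Equivalently, in the language of the preceding theorem, both endpoint loci $\{M+r\dot\gamma(M)\}$ and $\{M-r\dot\gamma(M)\}$ coincide with $\Gamma$, which is exactly what makes $\Gamma$ invariant under $\mathcal M$. Your closing caveat about the regularity of the midpoint curve (simplicity, smoothness, validity of (\ref{assumption})) is legitimate and is also left implicit by the paper, but it is secondary to the gap above.
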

This theorem gives the way to construct magnetic billiard domains with 
Gutkin property. Let $\Gamma$ be the Wegner curve constructed for the segment of length $
L=2r\sin\delta,
$
see \cite {Wegner}. $\Gamma$ determines the curve consisting of midpoints of the inscribed segments of length $
L=2r\sin\delta,
$ denote it by $\tilde\Gamma$. The last step is to reconstruct our magnetic billiard table  $\gamma$ as a curve parallel to $\tilde \Gamma$ at the distance $r\cos\delta$. 
\begin{figure}[h]
	\centering
	\includegraphics[width=0.9\linewidth]{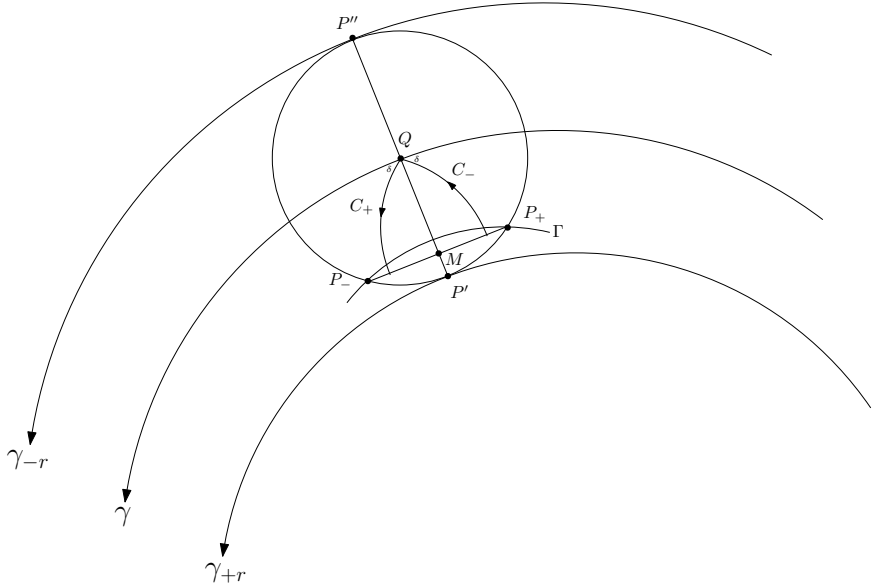}
	\caption{Curve $\Gamma$ of the Larmor centers.}
	\label{4}
\end{figure}

\subsection{Analytic approach, formal solution}
Choose parameterization on the curve $\gamma$ by the angle $\varphi$ of the tangent vector to $\gamma$ with a fixed direction. Let $\rho$ be the curvature radius of $\gamma$. Let $\gamma(\varphi_1)$, $\gamma(\varphi_2)$ be two points on $\gamma$, where $\varphi_1,\varphi_2 \in [0, 2\pi]$.

We assume that $\gamma$ has  $\delta$-Gutkin property.  Define the following change of variables: $$\bar{\varphi} = \frac{\varphi_2+\varphi_1}{2}, d = \frac{\varphi_2-\varphi_1}{2}.$$ Consider the invariant curve of the magnetic billiard consisting of the Larmor arcs starting with angle $\delta$ with $\gamma$ (invariance is equivalent to  $\delta$-Gutkin property). It is easy to see that along this invariant curve $d$ becomes a function of $\bar \varphi$.
Therefore we get that $\varphi_1 = \bar{\varphi} - d(\bar{\varphi})$ and $\varphi_2 = \bar{\varphi} + d(\bar{\varphi})$. So we  rewrite the $\delta$-Gutkin property of $\gamma$ as follows.
\begin{proposition}
	\begin{equation}\label{ro}
	\int_{\bar{\varphi} - d(\bar{\varphi})}^{\bar{\varphi} + d(\bar{\varphi})}\rho(\xi)e^{i\xi}d\xi = \frac{2}{\beta}\sin(\delta+d(\bar{\varphi}))e^{i\bar{\varphi}}
	\end{equation}
\end{proposition}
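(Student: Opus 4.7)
The strategy is to compute the chord $\gamma(\varphi_2)-\gamma(\varphi_1)$ in two different ways and equate them. The integral on the left of \eqref{ro} is just this chord written as an arc-length integral, so the real content is to evaluate the same vector from the geometry of the Larmor arc and to check that the two expressions agree.

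First I would record the integral representation. Since $\varphi$ is the tangent angle and $\rho$ is the radius of curvature, one has $d\gamma=e^{i\varphi}ds=\rho(\varphi)e^{i\varphi}d\varphi$ in complex notation, so
$$\gamma(\varphi_2)-\gamma(\varphi_1)=\int_{\varphi_1}^{\varphi_2}\rho(\xi)e^{i\xi}d\xi.$$
Under the change of variables $\varphi_1=\bar\varphi-d,\ \varphi_2=\bar\varphi+d$ this is exactly the left-hand side of \eqref{ro}.

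Next I would compute the same chord from the Larmor circle. By the $\delta$-Gutkin hypothesis the Larmor arc joining the two boundary points enters $\Omega$ at one of them and exits at the other, in both cases making angle $\delta$ with $\gamma$. Choosing the labeling so that the entry is at $\gamma(\varphi_2)$ and the exit at $\gamma(\varphi_1)$ (this is the counterclockwise-compatible choice that makes $d$ positive), the single Larmor center $P$ can be expressed from either endpoint via formulas \eqref{a}. Viewing $\gamma(\varphi_2)$ as the base point of the reflected (entering) arc $C_+$ and $\gamma(\varphi_1)$ as the base point of the exiting arc $C_-$ yields
$$P=\gamma(\varphi_2)+rR_{\pi/2+\delta}\dot\gamma(\varphi_2)=\gamma(\varphi_2)+ire^{i(\varphi_2+\delta)},$$
$$P=\gamma(\varphi_1)+rR_{\pi/2-\delta}\dot\gamma(\varphi_1)=\gamma(\varphi_1)+ire^{i(\varphi_1-\delta)},$$
where I used $\dot\gamma(\varphi_k)=e^{i\varphi_k}$ and $R_\theta$ acting as multiplication by $e^{i\theta}$. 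Subtracting these two expressions for $P$ gives
$$\gamma(\varphi_2)-\gamma(\varphi_1)=ir\bigl(e^{i(\varphi_1-\delta)}-e^{i(\varphi_2+\delta)}\bigr).$$

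Finally I would carry out the trigonometric simplification. Substituting $\varphi_{1,2}=\bar\varphi\mp d$, factoring out $e^{i\bar\varphi}$, and using $e^{-i\alpha}-e^{i\alpha}=-2i\sin\alpha$ with $\alpha=\delta+d$, together with $ir\cdot(-2i)=2r=2/\beta$, produces the right-hand side of \eqref{ro}. The one genuinely non-mechanical point in the argument is the orientation bookkeeping in the Larmor step: one must identify which endpoint is the entry and which is the exit of the counterclockwise Larmor arc (equivalently, on which side of $\dot\gamma$ the angle $\delta$ is measured at each endpoint), and this is what selects the signs $\pm\delta$ in the two expressions for $P$. Once that identification is fixed, the rest is a direct expansion of complex exponentials.
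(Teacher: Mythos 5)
Your proof is correct and follows essentially the same strategy as the paper: both compute the chord $\gamma(\varphi_2)-\gamma(\varphi_1)$ once as the tangent-angle integral $\int_{\varphi_1}^{\varphi_2}\rho(\xi)e^{i\xi}d\xi$ along $\gamma$ and once from the geometry of the Larmor arc, with the same orientation convention (entry at $\gamma(\varphi_2)$, exit at $\gamma(\varphi_1)$). The only, immaterial, difference is in evaluating the Larmor contribution: the paper integrates $\frac{1}{\beta}e^{i\xi}$ over the tangent angle of the arc from $\varphi_2+\delta$ to $\varphi_1+2\pi-\delta$, whereas you subtract the two expressions for the Larmor center coming from formulas (\ref{a}); both yield $ir\bigl(e^{i(\varphi_1-\delta)}-e^{i(\varphi_2+\delta)}\bigr)$ and hence the right-hand side of (\ref{ro}).
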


\begin{proof}We can get from $\gamma(\varphi_1)$ to $\gamma(\varphi_2)$ through the curve $\gamma$, and return back by Larmor arc. See Fig \ref{5}.
	\begin{figure}[h]
		\centering
		\includegraphics[width=0.5\linewidth]{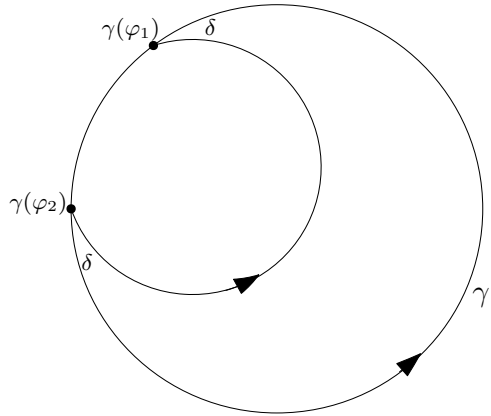}
		\caption{Larmor arc starting and ending with the same angle $\delta$.}
		\label{5}
	\end{figure}
	 We get:
	\begin{align*}
	&x(\varphi_2) - x(\varphi_1) = \int_{\bar{\varphi} - d(\bar{\varphi})}^{\bar{\varphi} + d(\bar{\varphi})}\rho(\xi)\cos(\xi)d\xi = -\frac{1}{\beta}\int^{\bar{\varphi}-d(\bar{\varphi})+2\pi-\delta}_{\bar{\varphi}+d(\bar{\varphi})+\delta}\cos(\xi)d\xi\\
	&y(\varphi_2) - y(\varphi_1) = \int_{\bar{\varphi} - d(\bar{\varphi})}^{\bar{\varphi} + d(\bar{\varphi})}\rho(\xi)\sin(\xi)d\xi = -\frac{1}{\beta}\int^{\bar{\varphi}-d(\bar{\varphi})+2\pi-\delta}_{\bar{\varphi}+d(\bar{\varphi})+\delta}\sin(\xi)d\xi
	\end{align*}
	Combining the above equations we get the desired equation (\ref{ro})
	\end{proof}
We wish to find a solution of this equation by a deformation of the solution corresponding to the unit circle. This means we assume:
$$
\rho(\varphi)=\rho_{\varepsilon}(\varphi); \  d(\varphi)=d_{\varepsilon}(\varphi),
$$ where for $\varepsilon=0$ we have
$$
\rho_0=1,\ d_0=const.
$$
Denote $$\rho_1(\varphi)= \frac { d }{d \varepsilon}  \Big |_{\varepsilon=0}\rho_{\varepsilon}(\varphi),\ 
d_1(\varphi)= \frac { d }{d \varepsilon}\Big  |_{\varepsilon=0}d_{\varepsilon}(\varphi)
$$

\begin{proposition}
	The following statements hold true: 
	
	(a)$\quad \delta, d_0,\beta$ are related by the equation:
	$$\beta\sin d_0=\sin(\delta+d_0).$$ 
(b)   	The Fourier coefficients  $\widehat{\rho_1}_n$ of $\rho_1$ and $\widehat{d_1}_n$ of $d_1$
	    all vanish except those $n$ which satisfy the Gutkin  relation: $$n \tan (d_0) = \tan(nd_0).$$
\end{proposition}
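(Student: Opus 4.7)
The plan is to expand equation~(\ref{ro}) in powers of $\varepsilon$. Substituting $\rho_\varepsilon = 1+\varepsilon\rho_1+O(\varepsilon^2)$ and $d_\varepsilon=d_0+\varepsilon d_1+O(\varepsilon^2)$, the zeroth-order balance gives $2\sin(d_0)\,e^{i\bar\varphi} = (2/\beta)\sin(\delta+d_0)\,e^{i\bar\varphi}$, which is part (a). Differentiating once in $\varepsilon$ using Leibniz's rule for the $\varepsilon$-dependent endpoints of integration, and then invoking (a) to simplify $\tfrac{\cos(\delta+d_0)}{\beta}-\cos d_0 = -\tfrac{\sin\delta}{\sin(\delta+d_0)}$, reduces the first-order equation to the linearization
\[
\int_{\bar\varphi-d_0}^{\bar\varphi+d_0}\rho_1(\xi)\,e^{i\xi}\,d\xi \;=\; -\frac{2\sin\delta}{\sin(\delta+d_0)}\,d_1(\bar\varphi)\,e^{i\bar\varphi}.
\]

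Next I Fourier-expand $\rho_1=\sum_n \widehat{\rho_1}_n\,e^{in\varphi}$ and $d_1=\sum_n \widehat{d_1}_n\,e^{in\varphi}$ and match Fourier coefficients of $e^{im\bar\varphi}$ on both sides of the linearization. The inner integral produces $\widehat{\rho_1}_{m-1}\cdot 2\sin(md_0)/m$ for each $m\neq 0$ (and $2d_0\widehat{\rho_1}_{-1}$ at $m=0$). Setting $k=m-1$, this yields, for every $k\neq -1$, the relation
\[
(\star_k)\qquad \widehat{\rho_1}_k \cdot \frac{\sin((k+1)d_0)}{k+1} \;=\; -\frac{\sin\delta}{\sin(\delta+d_0)}\,\widehat{d_1}_k.
\]

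The main obstacle is that $(\star_k)$ alone is only a proportionality and does not force either Fourier coefficient to vanish. The trick is to extract a second, independent relation at the same index $k$ by using the reality of $\rho_1$ and $d_1$: $\widehat{\rho_1}_{-k}=\overline{\widehat{\rho_1}_k}$ and $\widehat{d_1}_{-k}=\overline{\widehat{d_1}_k}$. Applying $(\star_k)$ with $k$ replaced by $-k$ (valid for $k\neq 1$) and taking complex conjugates of both sides, then observing that $\sin(xd_0)/x$ is an even function so $\sin((1-k)d_0)/(1-k)=\sin((k-1)d_0)/(k-1)$, I obtain the companion relation
\[
(\star\star_k)\qquad \widehat{\rho_1}_k \cdot \frac{\sin((k-1)d_0)}{k-1} \;=\; -\frac{\sin\delta}{\sin(\delta+d_0)}\,\widehat{d_1}_k.
\]

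For $|k|\geq 2$ both $(\star_k)$ and $(\star\star_k)$ hold, so their difference forces $\widehat{\rho_1}_k\bigl[\tfrac{\sin((k+1)d_0)}{k+1}-\tfrac{\sin((k-1)d_0)}{k-1}\bigr]=0$. A short expansion via $\sin((k\pm 1)d_0)=\sin(kd_0)\cos d_0\pm\cos(kd_0)\sin d_0$ collapses the bracket to $\tfrac{2}{k^2-1}\bigl(k\cos(kd_0)\sin d_0-\sin(kd_0)\cos d_0\bigr)$, whose vanishing is precisely the Gutkin relation $k\tan d_0=\tan(kd_0)$. Hence either Gutkin holds at $k$ or $\widehat{\rho_1}_k=0$; in the latter case $(\star_k)$ then forces $\widehat{d_1}_k=0$ as well, since $\sin\delta\neq 0$ for $\delta\in(0,\pi)$. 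The remaining indices $k\in\{-1,0,1\}$ trivially satisfy $k\tan d_0=\tan(kd_0)$, so (b) holds for all $k\in\mathbb{Z}$.
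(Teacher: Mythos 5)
Your proof is correct and follows essentially the same route as the paper: expand (\ref{ro}) to first order in $\varepsilon$, pass to Fourier coefficients, and use the reality of $\rho_1$ and $d_1$ to extract two independent scalar relations per mode, whose compatibility condition is exactly the Gutkin relation. The paper produces the second relation by splitting the $\bar\varphi$-differentiated equation (\ref{QUQU}) into real and imaginary parts rather than by your $k\mapsto -k$ conjugation trick, but the two eliminations are linearly equivalent (your $(\star_k)$ and $(\star\star_k)$ are the sum and difference of the paper's real and imaginary parts), and your write-up is if anything more explicit about the edge modes $k\in\{-1,0,1\}$ and about why $\widehat{d_1}_k$ vanishes along with $\widehat{\rho_1}_k$.
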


\begin{proof}	
Substituting in (\ref {ro}) $\varepsilon
=0$ we get (a).
Differentiating (\ref{ro}) with respect to $\varphi$ and extracting terms of order $\varepsilon$ we get
\begin{equation}\label{QUQU}
e^{id_0}\rho_1(\bar{\varphi}+d_0) -e^{-id_0}\rho_1(\bar{\varphi}-d_0) = 2\Big(\frac{1}{\beta}\cos(\delta+d_0)-\cos(d_0)\Big)\big(id_1(\bar{\varphi})+d_1'(\bar{\varphi}))\big)
\end{equation}
differentiate the imaginary part and subtract it from the real part we get 
\[
\cos(d_0)(\rho_1(\bar{\varphi}+d_0)-\rho_1(\bar{\varphi}-d_0)) = \sin(d_0)(\rho_1'(\bar{\varphi}+d_0)+\rho_1'(\bar{\varphi}-d_0))
\]
Equating Fourier coefficients in the last equation we get statement $(b)$.
\end{proof}

\begin{remark}
	In fact, it is an open question if the solution in formal power series in $\varepsilon$ of this equation can be constructed recursively and if the
	corresponding power series converge.
\end{remark}

\section{Proof of Proposition \ref{prop}  and Corollaries \ref{all1}, \ref{ellipse}}

\begin{proof}[Proof of Proposition \ref{prop}]
	Condition 2. of Definition \ref{def} reads in terms of $F$
	\begin{equation}\label{P}
	F(P_-(\epsilon))=F(P_+(\epsilon)).
	\end{equation}
	Differentiating this equality with respect to $\epsilon$ for
	$\epsilon=0$, and $\epsilon=\pi$ respectively and using the fact that
	$\frac{d}{d\epsilon}R_\epsilon =J\circ R_\epsilon$, we compute from (\ref{a})
	$$\frac{d}{d\epsilon}F(P_-(\epsilon))=dF(-rJR_{\pi/2-\epsilon}\dot\gamma(Q))=dF(-rR_{\pi-\epsilon}\dot\gamma(Q)),$$
	$$\frac{d}{d\epsilon}F(P_+(\epsilon))=dF(rJR_{\pi/2+\epsilon}\dot\gamma(Q))=dF(rR_{\pi+\epsilon}\dot\gamma(Q)).
	$$
	At these formulas for $\epsilon=0$ the differential  $dF$ is computed at the point $P'=P_-(0)=P_+(0)\in\gamma_{+ r}$ and is evaluated on the opposite vectors 	$\pm r\dot\gamma(Q)$.
	Similarly, for $\epsilon=\pi$ the differential  $dF$ is computed at the point $P''=P_-(\pi)=P_+(\pi)\in\gamma_{- r}$ and  is evaluated on the opposite vectors $\mp r\dot\gamma(Q)$ again.
	
	This proves that
	$$F|_{\gamma_{\pm r}}={\rm const}.$$
\end{proof}

\begin{proof}[Proof of Corollary \ref{all1}]
	Notice that $f_{\pm r}$ depends on $r$ as a polynomial function, so
	$f_{\pm r}$ is a polynomial in $x,y$, and $r$. If $r$ is small enough then all parallel curves $\gamma_{\pm r}$
	are smooth as real curves.
	However, there
	is an open interval of $r$, where, by using a differential geometry argument, one can claim that the fronts
	$\gamma_{+r}$ do have singularities. Hence, the system of
	equations
	$$
	\partial_x f_{+r}=\partial_y f_{+r}=f_{+r}=0
	$$
	defines an algebraic curve in ${\mathbb C}^3$ and its projection on
	the
	$r$-line is a Zariski open set.
	It then follows that singularities persist for all
	but finitely many $r.$
\end{proof}

It would be interesting to prove that every non-circular magnetic billiard does not admit a polynomial integral for all values of $\beta$.
This is out of reach at the present moment.

\begin{proof}[Proof of Corollary \ref{ellipse}]
	The equation of the parallel curves for the ellipse reads (see, e.g.,
	\cite{FN}):
	$$
	f_{+r}=f_{-r}=
	$$
	$$a^8 (b^4+(r^2-y^2)^2-2 b^2 (r^2+y^2))+b^4 (r^2-x^2)^2 (b^4-2 b^2 (r^2-x^2+y^2)+(x^2+y^2-r^2)^2)
	$$
	$$
	-2
	a^6 (b^6+(r^2-y^2)^2 (r^2+x^2-y^2)-b^4 (r^2-2 x^2+3 y^2)-b^2 (r^4+3 y^2 (x^2-y^2)+
	$$
	$$
	r^2 (3
	x^2+2 y^2)))+
	2 a^2 b^2 (-b^6 (r^2+x^2)-(-r^2+x^2+y^2)^2 (r^4-x^2 y^2-r^2 (x^2+y^2))+
	$$
	$$
	b^4 (r^4-3 x^4+3
	x^2 y^2+r^2 (2 x^2+3 y^2))+b^2 (r^6-2 x^6+x^4 y^2-3 x^2 y^4+r^4(-4 x^2+2 y^2)+
	$$
	$$
	r^2 (5 x^4-3 x^2 y^2-3 y^4)))+
	a^4 (b^8+2 b^6 (r^2+3 x^2-2 y^2)+(r^2-y^2)^2 (-r^2+x^2+y^2)^2-
	$$
	$$
	2 b^4 (3 r^4-3 x^4+5 x^2 y^2-3 y^4+4
	r^2 (x^2+y^2))+2 b^2 (r^6-3 x^4 y^2+x^2 y^4-2 y^6+
	$$
	$$
	2 r^4 (x^2-2 y^2)+r^2 (-3 x^4-3 x^2 y^2+5 y^4)))=0.
	$$
	It is known to be irreducible (see \cite{FN}). 
	Moreover, the
	parallel curves $\gamma_{\pm r}$ have singularities in the complex
	plane for every $r$ such that $\frac{1}{r}>2 k_{max}=\frac{2a}{b^2}$, which are
	$$
	(0,\pm\frac{\sqrt{b^2-a^2}\sqrt{a^2-r^2}}{a}),\quad (\pm\frac{\sqrt{a^2-b^2}\sqrt{b^2-r^2}}{b},0).
	$$
	Therefore the corollary follows from
	Theorem \ref{main1}.
\end{proof}


\section {\bf Proof of main Theorem \ref{main1}}\label{mainproof}
We shall prove Theorem \ref{main1} for the curve $\gamma_{+ r}$ (proof for $\gamma_{- r}$ is completely analogous, replacing $P'$ and $\epsilon$ close to $0$ with $P''$ and $\epsilon$ close to $\pi$).

Our first step is to expand equation (\ref{P}) in powers of $\epsilon$ and to collect $\epsilon^3$ terms. 
For any function $F$ which is invariant under $\mathcal{M}$, we can
rewrite equation (\ref{P}) at any non-critical point
$P'\in\gamma_{+r}$ as follows:

Denote by $n=J\dot\gamma$ the unit normal vector to $\gamma$. Then $n$ is
also normal to $\gamma_{+r}$ and $\gamma_{-r}$ at the corresponding
points. It is useful to rewrite formulas (\ref{a})  in
a slightly more convenient form.  We have
\begin{equation}\label{a1}
P_\pm(\epsilon)=Q+rR_{\pi/2\pm\epsilon}\dot\gamma(Q)=Q+rR_{\pm\epsilon}n=
\end{equation}$$
=Q+rn-r(n-R_{\pm\epsilon}n) =P'-r(I-R_{\pm\epsilon})n
$$

Notice that  the unit normal $n$ of $\gamma$ is also a normal to the curve $\gamma_{+ r}$ at $P'$. One has $n=\pm\frac{\nabla F}{|\nabla F|}$.
We can rewrite equation (\ref{P}) with the help of
(\ref{a1}) :
\begin{align}\label{rem}
	& F\left(P'\pm r(I-R_{\epsilon})\left(\frac{\nabla F}{|\nabla F|}\right)(P')\right)- \\
	& F\left(P'\pm r(I-R_{-\epsilon})\left(\frac{\nabla F}{|\nabla F|}\right)(P')\right)=0 , \quad P'\in \gamma_{+r} \nonumber
\end{align}
This can be written for $P'=(x,y)\in\gamma_{+r}$ explicitly:

\begin{align}\label{rem1} 
	&F\left(x\pm r\frac{F_x(1-\cos\epsilon)+F_y\sin\epsilon}{|\nabla F|},y\pm r\frac{F_y(1-\cos\epsilon)-F_x\sin\epsilon}{|\nabla F|} \right)-\\
	&F\left(x\pm r\frac{F_x(1-\cos\epsilon)-F_y\sin\epsilon}{|\nabla F|},y\pm r\frac{F_y(1-\cos\epsilon)+F_x\sin\epsilon}{|\nabla F|} \right)=0\nonumber
\end{align}

The coefficient at $\epsilon^3$ reads
\begin{align}\label{rem2}
	&(F_{xxx}F_y^3-3F_{xxy}F_y^2F_x+3F_{xyy}F_yF_x^2-F_{yyy}F_x^3)\,\pm \\
	&3\beta(F_x^2+F_y^2)^{\frac{1}{2}}(F_{xx}F_xF_y+F_{xy}(F_y^2-F_x^2)-F_{yy}F_xF_y)=0,\quad (x,y)\in\gamma_{+ r} \nonumber
\end{align}

Remarkably, the left-hand side of (\ref{rem2}) is the complete
derivative along the tangent vector field $v$ to $\gamma_{+ r}$,
$v=(F_y,-F_x),$ of the following expression, which therefore must be
constant:
\begin{equation}\label{rem3}
H(F)\pm\beta|\nabla F|^3={\rm const}, \quad (x,y)\in\gamma_{+r};
\end{equation}
here we used the notation
$$
H(F):=F_{xx}F_y^2-2F_{xy}F_xF_y+F_{yy}F_x^2.
$$

Let us remark that (\ref{rem1}) and therefore also (\ref{rem3}) are
valid only for those points where $\nabla F$ does not vanish.
However,  this requirement is not satisfied if the
polynomial $F$ is not square-free. Therefore, we proceed as follows: Let
us denote by $f_{+r}$ irreducible defining polynomial of $\gamma_{+
	r}$. Then we have
$$ F=f_{+r}^k\cdot g,
$$ for some integer $k\geq 1$, where the polynomial $g$ does not
vanish identically on $\gamma_{+ r}$. Given an arc of $\gamma_{+ r}$
where $g$ does not vanish, we may assume that $g$ is positive on the
arc (otherwise we change the sign of $F$ and $f_{+r}$ if needed). Moreover, since $f_{+r}$
is an irreducible polynomial, we may assume that $\nabla f_{+r}$
does not vanish on the arc. Therefore equation (\ref{rem3}) can be
derived in the same manner for the function
$F^{\frac{1}{k}}=f_{+r}\cdot g^{\frac{1}{k}}$, which obviously is
invariant under the map $\mathcal{M}$ exactly as $F$ is.
Thus we
have
\begin{equation}\label{rem4}
H(f_{+r}\cdot g^{\frac{1}{k}})\pm \beta|\nabla (f_{+r}\cdot
g^{\frac{1}{k}})|^3={\rm const}, \quad (x,y)\in\{f_{+r}=0\}.
\end{equation}
Using the identities
$$
H(f_{+r}\cdot g^{\frac{1}{k}})=g^{\frac{3}{k}}H(f_{+r}),\quad \nabla
(f_{+r}\cdot g^{\frac{1}{k}})=g^{\frac{1}{k}}\nabla (f_{+r}),
$$ which are valid for all $(x,y)\in\{f_{+r}=0\}$.
We obtain from (\ref{rem4}) that
\begin{equation}\label{rem44}
g^{\frac{3}{k}}(H(f_{+r})\pm \beta|\nabla f_{+r}|^3)={\rm const}, \quad
(x,y)\in\gamma_{+r}.
\end{equation}
Raising back to the power $k$ we get
\begin{equation}\label{rem5}
g^3(H(f_{+r})\pm \beta|\nabla f_{+r}|^3)^k={\rm const}, \quad
(x,y)\in\gamma_{+r}.
\end{equation}
Next we establish the following:
\begin{proposition}\label{prop1}
	The constant in equation {\rm(\ref{rem5})}  cannot be $0$.
\end{proposition}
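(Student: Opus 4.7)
I would argue by contradiction. Assume the constant in equation (\ref{rem5}) is zero. On the arc of $\gamma_{+r}$ on which we have arranged $g>0$ and $\nabla f_{+r}\neq 0$, raising to the $1/k$-th power gives
$$H(f_{+r})\pm \beta |\nabla f_{+r}|^3 = 0.$$
The strategy is to recognize the left-hand side as a multiple of the curvature of $\gamma_{+r}$, and then to contradict Lemma \ref{bounds}.

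The key observation is the classical implicit-function identity: at a smooth point of $\{f=0\}\subset\mathbb{R}^2$, the signed curvature of the level curve, computed with respect to the unit normal $n=\nabla f/|\nabla f|$, equals
$$k = -\frac{H(f)}{|\nabla f|^3}.$$
This follows by parametrizing the curve by arclength, differentiating $f\circ\gamma=0$ twice to obtain $\gamma'^{\top}(\nabla^2 f)\gamma' + \nabla f\cdot\gamma'' = 0$, inserting the unit tangent $\gamma'=(f_y,-f_x)/|\nabla f|$, and using $\gamma''=k\,n$.

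Substituting the curvature formula into the displayed vanishing equation on the arc forces $k_{+r}\equiv\mp\beta$ there. But Lemma \ref{bounds} asserts the strict inequality $|k_{+r}|<\beta$ throughout $\gamma_{+r}$, which is a direct consequence of the strong-field hypothesis (\ref{assumption}). This contradiction proves Proposition \ref{prop1}.

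The only real delicacy lies in sign bookkeeping: the $\pm$ in (\ref{rem5}) tracks the choice $n=\pm \nabla F/|\nabla F|$ made in passing from (\ref{rem1}) through the derivation, while the curvature identity introduces its own sign through the orientation of the tangent. Both signs are fixed on the connected arc under consideration, and in either case the resulting value $|k_{+r}|=\beta$ is excluded by the strict bound of Lemma \ref{bounds}. No global extension argument is needed: a single smooth point of $\gamma_{+r}$ at which the identity $H(f_{+r})\pm \beta|\nabla f_{+r}|^3=0$ is claimed to hold already yields the contradiction.
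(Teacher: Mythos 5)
Your proof is correct and follows essentially the same route as the paper: assume the constant vanishes, use the implicit curvature formula $\pm k_{+r}=H(f_{+r})/|\nabla f_{+r}|^3$ at a smooth point of the arc to deduce $|k_{+r}|=\beta$, and contradict the strict bound $|k_{\pm r}|<\beta$ of Lemma \ref{bounds}. Your extra care with the sign conventions and with dividing out $g^3>0$ on the arc is consistent with (and slightly more explicit than) the paper's argument.
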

\begin{proof}
	Recall the formulas for the curvature $k$ of the curve defined
	implicitly by $\{f_{+r}=0\}$:
	\begin{equation}\label{k}
	\textrm{div} \left(\frac{\nabla f_{+r}}{|\nabla f_{+r}|}
	\right)=\frac{H(f_{+r})}{|\nabla f_{+r}|^3}=\pm k_{+r}.
	\end{equation}
	
	Now we take any point on $\gamma_{+r}$ and substitute into
	(\ref{rem5}). This gives that the constant must be non-zero. Indeed,
	if the constant is zero, then $$ \frac{H(f_{+r})}{|\nabla
		f_{+r}|^3}=\mp\beta.
	$$
	Then by formulas (\ref{k}) we have
	$$
	k_{+r}=\pm\beta.
	$$
	But this is not possible, because of the bounds on the curvature of
	the parallel curves (Lemma \ref{bounds}).
\end{proof}

Now we are in position to complete the proof of Theorem \ref{main1}.
Consider the equation (\ref{rem5}) in $\mathbb{C}^2.$ It
follows from (\ref{rem5}) and Proposition \ref{prop1} that the curve
$\{f_{+r}=0\}$ has no singular points in $\mathbb{C}^2,$ since at
singular points both $H(f_{+r})$ and $\nabla (f_{+r})$ vanish.

This completes the proof of Theorem \ref{main1}.
\subsection{Infinite regular  points}
In fact we can say more on the points at infinity. 
\begin{theorem}
	Any non-singular
	point of intersection of the projective curve $\{\tilde{f}_{\pm
		r}=0\}$ in $\mathbb{C}P^2$ with the infinite line $\{z=0\}$ away
	from the isotropic points $(1:\pm i:0)$ must be a tangency point
	with the infinite line. Here $\tilde{f}_{\pm r}$ is a homogenization
	of $f_{\pm r}$.
\end{theorem}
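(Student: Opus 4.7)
The plan is to argue by contradiction: assume $(a:b:0)$ is a smooth intersection of $\{\tilde f_{+r}=0\}$ with $\{z=0\}$ which is non-isotropic ($a^{2}+b^{2}\neq 0$) and transverse, and derive an impossible order of vanishing for $g$ along the branch of $\{f_{+r}=0\}$ through this point. The case of $\tilde f_{-r}$ is symmetric by swapping signs in (\ref{rem5}). Working in the affine chart $y=1$ with coordinates $(X,Z)=(x/y,1/y)$, the infinite point becomes $(X_{0},0)=(a/b,0)$ and the transversality assumption makes $Z$ serve as a uniformizer on the branch.

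The crucial local input is a consequence of Euler's theorem. Setting $d=\deg f_{+r}$, at a simple root $(a,b)$ of the top form $f^{(d)}$ we have $\nabla f^{(d)}(a,b)=\lambda(-b,a)$ for some $\lambda\neq 0$; differentiating Euler's identity once more gives $a^{2}A+2abB+b^{2}D=0$ for the Hessian entries $A,B,D$ of $f^{(d)}$ at $(a,b)$, and a direct substitution then yields $H(f^{(d)})(a,b)=0$. Hence on the branch, $H(f)$ has pole in $Z$ of order at most $3d-5$, while $\beta|\nabla f|^{3}$ has pole of exact order $3d-3$, since $[(f^{(d)}_{x})^{2}+(f^{(d)}_{y})^{2}](a,b)=\lambda^{2}(a^{2}+b^{2})\neq 0$ by non-isotropy. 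Let $\hat g(X,Z)=Z^{m}g(X/Z,1/Z)$. Substituting into (\ref{rem5}) and tracking leading orders along the branch, the left-hand side is asymptotic to $\beta^{k}\cdot(\text{nonzero constant})\cdot \hat g^{3}\cdot Z^{-3k(d-1)-3m+3p}$, where $p$ is the order of vanishing of $\hat g|_{\text{branch}}$ at $(X_{0},0)$. Matching with the nonzero constant $C$ from Proposition \ref{prop1} forces $p=m+k(d-1)$, and continuing the $Z$-expansion recursively determines every Taylor coefficient of $\hat g|_{\text{branch}}$.

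The main obstacle is turning the forced value $p=m+k(d-1)$ into a contradiction with $\gcd(f_{+r},g)=1$. The simplest ingredient is Bezout in $\mathbb{C}P^{2}$: the intersection multiplicity $I_{(a:b:0)}(f_{+r},g)=p$ satisfies $p\le md$, forcing $k\le m$; moreover the leading match forces $g^{(m)}(a,b)=0$ at every non-isotropic smooth transverse infinite point, bounding their number by $\deg g^{(m)}=m$. The argument is completed by iterating the recursive matching of higher-order Taylor coefficients of $\hat g|_{\text{branch}}$ until the polynomial $\hat g$ of total degree $m$ is over-determined; equivalently, $\hat g|_{\text{branch}}\equiv 0$, which by analytic continuation on the irreducible complex curve $\{f_{+r}=0\}$ implies $f_{+r}\mid g$, contradicting the defining property of $g$. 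This rules out transverse non-isotropic smooth intersections and proves every such intersection is tangent.
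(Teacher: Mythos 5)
Your preparatory computations are correct and worth keeping: at a simple (i.e.\ transverse) non‑isotropic root $(a:b)$ of the top form $f^{(d)}$ one indeed has $\nabla f^{(d)}(a,b)=\lambda(b,-a)$ with $\lambda\neq 0$, hence $H(f^{(d)})(a,b)=0$ while $(f^{(d)}_x)^2+(f^{(d)}_y)^2=\lambda^2(a^2+b^2)\neq 0$, and the leading‑order matching in (\ref{rem5}) then forces $\hat g$ to vanish along the branch to exact order $p=m+k(d-1)\geq 1$. The genuine gap is the closing step. The ``recursive matching of higher‑order Taylor coefficients'' does not over‑determine $\hat g$: on the branch the identity $g^3\bigl(H(f)\pm\beta|\nabla f|^3\bigr)^k=C$ says precisely that $\hat g^3|_{\mathrm{branch}}$ equals the specific power series $C\,Z^{3p}\,(c+O(Z))^{-k}$ with $c\neq 0$; this \emph{determines} the Taylor coefficients of $\hat g|_{\mathrm{branch}}$ rather than annihilating them, and nothing prevents a degree‑$m$ polynomial from having that restriction. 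Your intended conclusion $\hat g|_{\mathrm{branch}}\equiv 0$ is in fact incompatible with your own leading‑order step (it would make the left-hand side vanish identically on the branch, contradicting $C\neq 0$). The Bezout count $p\le dm$ gives only $k\le m$, so the contradiction closes solely in the special case $k>m$ (e.g.\ $g$ constant); for $k\le m$ no absurdity is reached, and since (\ref{rem5}) genuinely holds under the hypotheses, no amount of coefficient matching alone can produce one.

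The paper's argument is direct rather than by contradiction, and much shorter. Homogenizing (\ref{rem5}) yields (\ref{hom}), $\tilde g^3\bigl(z\,H(\tilde f)+\beta((\tilde f)_x^2+(\tilde f)_y^2)^{3/2}\bigr)^k=\mathrm{const}\cdot z^{P}$ on $\{\tilde f=0\}$ with $P=3m+3k(d-1)>0$. Evaluating at a point of the curve on $\{z=0\}$ kills both the right-hand side and the term $z\,H(\tilde f)$, giving $(\tilde f)_x^2+(\tilde f)_y^2=0$ there; Euler's identity gives $x(\tilde f)_x+y(\tilde f)_y=0$, and away from $(1:\pm i:0)$ the two relations force $(\tilde f)_x=(\tilde f)_y=0$, which at a non‑singular point means the tangent line is exactly $\{z=0\}$, i.e.\ tangency. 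Note that your own computation shows $(\tilde f)_x^2+(\tilde f)_y^2\neq 0$ at a transverse non‑isotropic point, so your route necessarily falls into the complementary case $\tilde g(a,b,0)=0$ of this dichotomy, which is exactly the regime where a purely local expansion at the single point cannot be closed. If you want to keep a contradiction format, the efficient version is: transversality and non‑isotropy imply the bracket in (\ref{hom}) is nonzero at the point, and then you must rule out $\tilde g$ vanishing there to order $m+k(d-1)$ by some global input — which your proposal does not supply.
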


\begin{proof} 
	Consider $\mathbb{C}P^2$ with homogeneous coordinates
$(x:y:z)$ and the projective curve $\{\tilde{f}_{+r}=0\}.$ We shall
denote the homogeneous polynomials corresponding to $f$ and $g$ by
$\tilde{f}$ and $\tilde{g}$, respectively. Then the homogeneous
version of (\ref{rem5}) for $(x:y:z)\in \{\tilde{f}_{+r}=0\}$ reads
\begin{equation}\label{hom}  \tilde{g}^3\left(z\cdot
H(\tilde{f}_{+r})+\beta((\tilde{f}_{+r})_x^2+
(\tilde{f}_{+r})_y^2)^\frac{3}{2}\right)^k={\rm const}\cdot z^p.
\end{equation}
Here the power $p=3\deg g+3k(\deg f_{+r}-1)$ must be positive unless
the degree of the polynomial $f_{+r}$ and that of $F$ are equal to
one. But this is impossible, due to our convexity assumptions. Let
$Z$ be any point of intersection of $ \{\tilde{f}_{+r}=0\}$ with the
infinite line $\{z=0\}.$ Then by (\ref{hom}) for such a point we
have the two relations
$$
(\tilde{f}_{+r})_x^2+ (\tilde{f}_{+r})_y^2=0,\quad
x(\tilde{f}_{+r})_x+y(\tilde{f}_{+r})_y+z(\tilde{f}_{+r})_z=x(\tilde{f}_{+r})_x
+y(\tilde{f}_{+r})_y=0.
$$
But these two relations are compatible only in two cases: either
$$x^2+y^2=z=0,$$
or $$(\tilde{f}_{+r})_x= (\tilde{f}_{+r})_y=0.$$ This completes the
proof.
\end{proof}
\section {Proofs of and Theorems \ref{smooth},\ref{harmonics}.}\label{proofs}

First we prove Theorem \ref{smooth}
\begin{proof}
	We need to prove smoothness at the interior points as well as at the boundary points. 
	
	1.  Let $y_0\in\Omega_r$ be an interior point. We want to show smoothness of $F$ in a neighborhood $U$ of $y_0$. Fix a unit vector $w_0$
	such that $$
	y_0=\mathcal L(x_0,w_0)=x_0+rJw_0
	,$$ for some $$x_0\in {\Omega_{+}}.$$
	For a fixed $w_0$ the formula
	$$
	y\leftrightarrow x, y=x+rJw_0 
	$$
	gives a diffeomorphism between $U$
	and a neighborhood of the point $x_0$.
	Since by definition 
	$$F(y)=F(x+rJw_0)=\Phi(x,w_0).
	$$
	The last function is smooth since the coefficients of $\Phi$ are assumed to be smooth.\\
	2. Assume now that $y_0$ is a boundary point. We distinguish between two cases:\\
	2A. Let $y_0\in\gamma_{+r}$. Then $y_0=\mathcal L(x_0,w_0)=x_0+rJw_0$, where $x_0=\gamma(s_0), w_0=\dot\gamma(s_0)$ (see Fig.\ref{7}).
		\begin{figure}[h]
			\centering
			\includegraphics[width=0.5\linewidth]{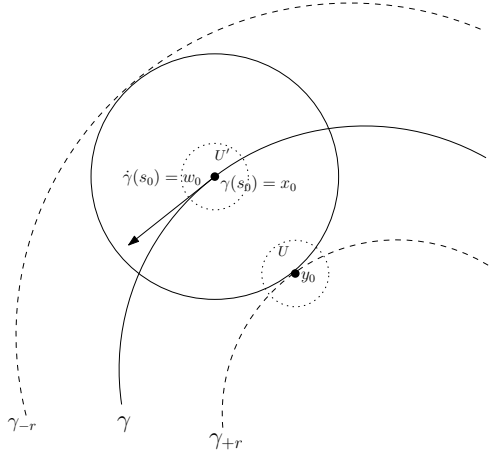}
			\caption{Smoothness of $F$ at the boundary point $y_0$.}
			\label{7}
		\end{figure}
		Let $U$ be a small ball around $y_0$. 
		Then the parallel transport 
		$$
		y\leftrightarrow x, y=x+rJw_0 
		$$ determines a diffeomorphism between the ball $U$ and a ball $U'$ around $x_0$.
		Exactly as we saw in previous item, we again get that
		$$F(y)=F(x+rJw_0)=\Phi(x,w_0).
		$$Thus $F$ extends smoothly to $U$ since $\Phi(x,w_0)$ is smooth on $U'$.\\
	2B. Proof in the case $y_0\in\gamma_{-r}$ is completely analogous. 
		This completes the proof of Theorem \ref{smooth}.
\end{proof}

In order to prove  Theorem \ref{harmonics} we need to localize the statement. 
For a small number $\epsilon$ we define the closed sets 
$$
A_{s_0,\epsilon}:=\bigcup_{s:|s-s_0|\leq\epsilon}C_s.
$$
Obviously the theorem follows from the following lemma:

\begin{lemma}\label{lemma}
	Let $F$ be a $C^{\infty}$ function, $F:A_{s_0,\epsilon}\rightarrow \mathbb{R}$. Suppose that the restriction of $F$
	 to any circle $C_s,\ |s-s_0|<\epsilon$ coincides with
	the restriction to $C_s$ of some polynomial $F_s$ of degree at most $N.$ It then follows that
	$F$ is a polynomial in $x_1,x_2$ of degree at most $2N.$
\end{lemma}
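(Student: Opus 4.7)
The plan is to proceed by induction on $N$. In the base case $N=0$, $F|_{C_s}$ is a constant $c(s)$; two nearby circles $C_{s_1},C_{s_2}$ meet in two points where $F$ is well-defined, so $c(s_1)=c(s_2)$. Hence $c$ is constant on $|s-s_0|<\epsilon$, and since $A_{s_0,\epsilon}=\bigcup C_s$, $F$ is globally constant.

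For the inductive step, assume the lemma is known for $N-1$. I let $P(x)$ be the unique harmonic polynomial of degree $\le N$ extending $F|_{C_{s_0}}$: identifying $\mathbb{R}^2=\mathbb{C}$, if $F|_{C_{s_0}}=\sum_{|k|\le N}a_k e^{ik\theta}$, then
$$P(x)=\sum_{k=0}^{N}a_k \left(\frac{x-\gamma(s_0)}{r}\right)^{k}+\sum_{k=1}^{N}a_{-k}\left(\frac{\overline{x-\gamma(s_0)}}{r}\right)^{k}.$$
Setting $q(x):=|x-\gamma(s_0)|^2-r^2$, the smooth function $F-P$ vanishes on the regular hypersurface $C_{s_0}=\{q=0\}$, so Hadamard's lemma produces a smooth $h$ on $A_{s_0,\epsilon}$ (shrinking $\epsilon$ if needed so that $A_{s_0,\epsilon}$ is a tubular neighborhood of $C_{s_0}$) with $F-P=q\cdot h$. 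My plan is to verify that $h$ satisfies the hypothesis of the lemma with $N-1$ in place of $N$; the inductive hypothesis will then give $\deg h\le 2N-2$ and hence $\deg F=\deg(P+qh)\le 2N$.

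The crux is to show that $h|_{C_s}$ is a trigonometric polynomial of degree $\le N-1$ for each $s$ near $s_0$. Fix $s\ne s_0$, set $d:=\gamma(s)-\gamma(s_0)\ne 0$, and use $z=e^{i\theta_s}$ as coordinate on $C_s$. A short computation (using $\bar z=z^{-1}$ on the unit circle) gives
$$z\cdot q|_{C_s}=r\bar d\,z^2+|d|^2 z+rd,$$
whose two roots $z_1,z_2$ satisfy $|z_1z_2|=|d/\bar d|=1$ and, by the quadratic formula with $|d|<2r$, each have modulus $1$; they are precisely the two points of $C_{s_0}\cap C_s$. Meanwhile $(F-P)|_{C_s}$, being a trig polynomial of degree $\le N$, equals $z^{-N}Q(z)$ for some $Q\in\mathbb{C}[z]$ of degree $\le 2N$, and $Q(z_1)=Q(z_2)=0$ because $F=P$ on $C_{s_0}\cap C_s$. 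Factoring $Q=(z-z_1)(z-z_2)S$ with $\deg S\le 2N-2$ and dividing by $q|_{C_s}=(r\bar d/z)(z-z_1)(z-z_2)$ yields
$$h|_{C_s}=\frac{z^{-N+1}S(z)}{r\bar d},$$
a Laurent polynomial with exponents in $[-N+1,N-1]$, i.e.\ the claimed trig polynomial of degree $\le N-1$. The limiting case $s=s_0$ follows from smoothness of $h$ and continuity of the Fourier coefficients of $h|_{C_s}$ in $s$.

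The main obstacle is precisely this Laurent-polynomial calculation: the drop from degree $N$ to $N-1$ rests on noticing both that the two roots of $z\cdot q|_{C_s}$ lie on the unit circle and that they are automatically shared with $z^N(F-P)|_{C_s}$. Without these two cancellations the induction would not close, and verifying them carefully is the heart of the argument.
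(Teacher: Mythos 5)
Your proof is correct, and its skeleton is the same as the paper's: induction on $N$, with the base case handled by the fact that nearby circles intersect, and the inductive step carried out by subtracting a degree-$N$ polynomial interpolant of $F$ on $C_{s_0}$ and applying Hadamard's lemma to factor out $q=|x-\gamma(s_0)|^2-r^2$. Where the two arguments genuinely diverge is in the step you yourself identify as the crux, namely showing that the Hadamard quotient restricts to a trigonometric polynomial of degree at most $N-1$ on each $C_s$. The paper does this analytically: writing $(F-F_0)|_{C_s}=(a^2+2ar\cos t)\,g$ and comparing Fourier coefficients yields the three-term recurrence $rg_{k+1}+ag_k+rg_{k-1}=f_k/a$, which is homogeneous for $|k|>N$; since $|a|<2r$ its characteristic roots $e^{\pm i\alpha}$ lie on the unit circle, so any nonzero solution fails to decay, contradicting the decay of Fourier coefficients of the continuous function $g$. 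You instead argue algebraically: the two roots of $z\,q|_{C_s}$ are the intersection points $C_s\cap C_{s_0}$ (distinct and on the unit circle precisely because $0<|d|<2r$), the numerator $z^N(F-P)|_{C_s}$ automatically vanishes there since $F=P$ on $C_{s_0}$, and cancelling the common quadratic factor drops the degree by one on the nose. Your route buys an explicit formula for $h|_{C_s}$ and identifies \emph{all} vanishing Fourier modes at once, needing only continuity at the single limiting circle $s=s_0$; the paper's route avoids locating the intersection points and the polynomial factorization, at the cost of invoking the asymptotics of the recurrence and handling the high positive and negative modes by the same two-sided argument. Both hinge on the same geometric input ($\epsilon<2r$, i.e.\ the circles genuinely intersect), surfacing either as roots of the characteristic polynomial on the unit circle or as roots of $z\,q|_{C_s}$ on the unit circle. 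One small point worth spelling out in your write-up: the justification that the roots of $r\bar d\,z^2+|d|^2z+rd$ have modulus $1$ is cleanest via the substitution $z=e^{i\arg d}w$, which reduces it to the real quadratic $rw^2+|d|w+r=0$ with negative discriminant; ``the quadratic formula'' applied directly to a quadratic with non-real coefficients does not by itself give conjugate roots.
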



\begin{proof} We shall say that $F$ has
	property $P_N$ if the restriction of $F$ to any circle $C_s,\ |s-s_0|<\epsilon$
	lying in $A_{s_0,\epsilon}$ coincides with
	the restriction to $C_s$ of some polynomial $F_s$ of degree at most $N$.
	The proof of the lemma goes by induction on the degree $N$.
	
	1) For $N=0$ the lemma holds since if $F$ has property $P_0$
	then $F$ is a constant on any circle $C_s$, and since any two of the circles $C_s$ intersect if $\epsilon<2r$, $F$ must be
		a constant on the whole $A_{s_0,\epsilon}$.
	
	2) Assume now that any function satisfying property $P_{N-1}$  is a
	polynomial of degree at most $2(N-1).$
	
	Let $F$ be any smooth function on $A_{s_0,\epsilon}$ with property $P_N$. Consider the circle
	$C_{s_0}$. We may assume that $\gamma({s_0})=0$ so the circle has the equation $$C_{s_0}=\{x_1^2+x_2^2=r^2\}.$$ Let $F_0$ be
	the polynomial of degree $N$ satisfying $F\big |_{C_{s_0}}=F_0\big |_{C_{s_0}}.$ 
	
	Next we apply 
	Hadamard's lemma to the function $F-F_0$. We consider the annulus 
	$$
	A=\{x: r-\delta\leq |x|\leq r+\delta\},
	$$
	where $\delta$ is small so that $F$ is smooth in $A$. Moreover, decreasing  $\epsilon$ if needed, we may assume that
	$$
	A_{s_0,\epsilon}\subset A.
	$$

	By Hadamard's lemma there exists a function $G\in C^{\infty}(A)$ so that
	\begin{equation}\label{F0}
	F(x_1,x_2)-F_0(x_1,x_2)=(x_1^2+x_2^2-r^2)G(x_1,x_2), \quad \forall (x_1,x_2)\in A.
	\end{equation}
	Let us show now that $G$ has property $P_{N-1}$ on $A_{s_0,\epsilon}$. Then by induction
	we will have that $G$ is a polynomial of degree $2(N-1)$ and thus
	$F$ is a polynomial of degree $2N$ at most. We need to show that the
	restriction $g:=G|_{C_s}$ to any circle $C_s$ coincides with the restriction of a polynomial of degree $(N-1)$ or less. With no loss of
	generality we may assume that the circle $C_s$ is centered on the
	$x-$axes (otherwise apply suitable rotation of the plane). Then
	$$C_s=\{(x_1,x_2)\in A_{s_0,\epsilon}: (x_1-a)^2+x_2^2=r^2\}, \quad |a|<\epsilon.$$
	Substituting $$x_1=a+r\cos t,\ \ x_2=r\sin t$$ into (\ref{F0}) we have
	$$
	(F-F_0)|_{C_s}=(a^2+2ar\cos t)\cdot g.
	$$
	Writing the left and the right hand side
	in Fourier series we get
	$$
	\sum_{-\infty}^{+\infty}f_ke^{ikt}=
	a(a+re^{it}+re^{-it})\sum_{-\infty}^{+\infty}g_ke^{ikt},
	$$ where the sum on the left hand side is finite, since $F$ has
	property $P_N.$ Thus we obtain for the coefficients:
	$$
	rg_{k+1}+ag_k+rg_{k-1}=\frac{1}{a}f_k, \quad\forall k\in \mathbb{Z}
	$$
	Since, we have: $$f_k=0,\quad \forall k, |k|>N,$$ then
	$$
	rg_{k+1}+ag_k+rg_{k-1}=0, \quad \forall k, |k|>N.
	$$
	The characteristic polynomial of this difference equation
	$$\lambda^2+\frac{a}{r}\lambda+1=0$$ has two complex conjugate roots since $|a|<\epsilon<2r$. Write the roots
	$$\lambda_{1,2}=e^{\pm i\alpha}, $$ therefore we get the formula:
	$$
	g_{N+l}=c_1e^{il\alpha}+c_2e^{-il\alpha}, \quad \forall l\geq2,
	$$ where
	$$c_1+c_2=g_N,\quad c_1e^{i\alpha}+c_2e^{-i\alpha}=g_{N+1}.
	$$
	It is obvious now that if at least one of the coefficients $g_N$ or
	$g_{N+1}$ does not vanish, then at least one of the constants $c_1,
	c_2$ does not vanish and therefore the sequence $\{g_{N+l}\}$ does
	not converge to $0$ when $l\rightarrow +\infty$. This contradicts
	the continuity of $g$. Therefore both $g_N,g_{N-1}$ must vanish and
	so $g$ is a trigonometric polynomial of degree at most $(N-1)$, proving that
	$G$ has property $P_{N-1}$. This completes the proof of Lemma.
\end{proof}


\begin{thebibliography}{10}

\bibitem {albers} Albers, P.; Banhatti, G.D; Herrmann, M. \textit{Numerical simulations of magnetic billiards in a convex domain in $\mathbb R^2$,} arXiv:1702.06309. 	

\bibitem{????}Aougab,T; Sun, X.; Tabachnikov, S.; Wang, Y. {\it On curves and polygons with the equiangular chord property.} Pacific J. Math. 274 (2015), no. 2, 305--324.

\bibitem {Berg} Berglund, N.; Kunz, H. {\it Integrability and ergodicity of classical billiards in a magnetic field.} J. Statist. Phys. 83 (1996), no. 1-2, 81-–-126.

\bibitem{B} Bialy, M. {\it On totally integrable magnetic billiards on
constant curvature surface.} Electron. Res. Announc. Math. Sci. 19 (2012), 112--119.

\bibitem{B2}Bialy, M. {\it Gutkin billiard tables in higher dimensions and rigidity.} Nonlinearity 31 (2018), no. 5, 2281--2293. 

\bibitem{B3}Bialy, M. {\it Billiard characterization of spheres.} Math. Ann. 374 (2019), no. 3-4, 1353--1370.

\bibitem{BM0} Bialy, M; Mironov, A.E. {\it Algebraic non-integrability of magnetic billiards.} J. Phys. A 49 (2016), no. 45, 455101, 18 pp.

\bibitem{BM1}Bialy, M.; Mironov, A.E. {\it Angular Billiard and Algebraic Birkhoff conjecture.} Adv. Math. 313 (2017), 102--126.

\bibitem{BM2} Bialy, M.; Mironov, A.E. {\it Algebraic Birkhoff conjecture for billiards on Sphere and Hyperbolic plane.}  J. Geom. Phys. 115 (2017), 150--156.

\bibitem{royal}Bialy, M.; Mironov, A.E. {\it A survey on polynomial in momenta integrals for billiard problems.} Philos. Trans. Roy. Soc. A 376 (2018), no. 2131, 20170418, 19 pp. 

\bibitem{Bolotin} Bolotin,S.V. {\it Integrable Birkhoff billiards.} 
Vestnik Moskov. Univ. Ser. I Mat. Mekh. 1990, no. 2, 33--36

\bibitem{LPT} Bor, G.; Levi, M.; Perline,R.; Tabachnikov, S. {\it Tire tracks and integrable curve evolution.} arXiv:1705.06314 
 
\bibitem{FN} Farouki, R.T.; Neff, C.A. {\it Algebraic properties of
plane offset curves.} Curves and surfaces in CAGD '89 (Oberwolfach,
1989). Comput. Aided Geom. Design 7 (1990), no. 1--4, 101--127.

\bibitem {GSG} Gutkin, B.; Smilansky, U.; Gutkin, E.
{\it Hyperbolic billiards on surfaces of constant curvature.} Comm.
Math. Phys. 208 (1999), no. 1, 65--90.

\bibitem {GB} Gutkin, B. {\it Hyperbolic magnetic billiards on surfaces of constant curvature.} Comm. Math. Phys. 217 (2001), no. 1, 33--53.

\bibitem{GE} Gutkin, E. {\it Billiard dynamics: a survey with the emphasis on open problems.} Regul. Chaotic Dyn. 8 (2003), no. 1, 1--13.

\bibitem{GT} Gutkin, E.; Tabachnikov, S. {\it Billiards in Finsler and Minkowski geometries.} J. Geom. Phys.40 (2002), no. 3--4, 277--301.

\bibitem{k} Kozlov, V.V.; Polikarpov, S.A. {\it On periodic trajectories of
a billiard in a magnetic field.} (Russian) Prikl. Mat. Mekh. 69 (2005), no. 6, 942--949;
translation in J. Appl. Math. Mech. 69 (2005), no. 6, 844--851 (2006).

\bibitem{KT} Kozlov, V.V.; Treshchev, D.V. {\it Billiards.
A Genetic Introduction to the Dynamics of Systems with Impacts.} Translated from the Russian by J. R. Schulenberger. Translations of Mathematical Monographs, 89. American Mathematical Society, Providence, RI, 1991.

\bibitem {BR} Robnik, M.; Berry, M.V. {\it Classical billiards in magnetic fields.}
 J. Phys. A 18 (1985), no. 9, 1361--1378.

\bibitem{SS} Sendra, J.R.; Sendra, J. {\it Algebraic analysis of offsets to hypersurfaces.} Math. Z. 234 (2000), no. 4, 697--719.

\bibitem{SeS} Segundo, F.S; Sendra, J.R. {\it Partial degree formulae for plane offset curves.} J. Symbolic Comput. 44 (2009), no. 6, 635--654.

\bibitem{scottish}The Scottish Book. Mathematics from the Scottish Café. Including selected papers presented at the Scottish Book Conference held at North Texas State University, Denton, Tex., May 1979. Edited by R. Daniel Mauldin. Birkhäuser, Boston, Mass., 1981. xiii+268 pp.

\bibitem{T2} Tabachnikov, S. {\it Remarks on magnetic flows and magnetic billiards, Finsler metrics and a magnetic analog of Hilbert's fourth problem.} In: Modern Dynamical Systems and Applications, 233--250, Cambridge Univ. Press, Cambridge, 2004.

\bibitem{Tab} Tabachnikov, S. {\it On algebraically integrable outer billiards.}
Pacific J. Math. 235 (2008), no. 1, 89--92.

\bibitem{VC} Cyr, V. {\it A number theoretic question arising in the geometry of plane curves and in billiard dynamics}. Proceedings of the American Mathematical Society (2012), pp.3035-3040.

\bibitem{Wegner}Wegner F. {\it From Elastica to Floating Bodies of Equilibrium.}
arXiv:1909.12596

\end{thebibliography}
\end{document}